\documentclass{amsart}
\usepackage{amsfonts}
\usepackage{graphicx}
\usepackage{amscd}
\usepackage{amsmath}
\usepackage{amssymb}
\usepackage{stmaryrd} 
\usepackage{color}

\makeatletter
\@namedef{subjclassname@2010}{%
  \textup{2010} Mathematics Subject Classification}
\makeatother

\theoremstyle{plain}

\newtheorem{corollary}{\bf Corollary}
\newtheorem{definition}{\bf Definition}
\newtheorem{lemma}{\bf Lemma}

\newtheorem{theorem}{\bf Theorem}

\theoremstyle{definition}

\numberwithin{equation}{section}

\title[Einstein type manifolds]{On Einstein-type manifold with \\ cyclic parallel Ricci tensor}
 
\author{M. Andrade} 
\author{H. Baltazar}
\author{A. da Silva$^\ast$}
\author{D. Tavares}

\address[M. Andrade]{Departamento de Matemática, 
 Universidade Federal de Sergipe, 49100-000, S\~ao Cristov\~ao-SE, Brazil.
}
\email{maria@mat.ufs.br}

\address{Current address: Department of Mathematics, Princeton University, Princeton, NJ, USA, 08544.}
\email{ma6208@princeton.edu}

\address[H. Baltazar]{Departamento de Matem\'{a}tica, Universidade Federal do Piau\'{\i}\\
64049-550 Te\-re\-si\-na, Piau\'{\i}, Brazil.}
\email{halyson@ufpi.edu.br}

\address[A. da Silva]{Faculdade de Matem\'{a}tica, Universidade Federal do Par\'a \\
66075-110, Bel\'em, Par\'a, Brazil.}
\email{adamsilva@ufpa.br}

\address[D. Tavares]{Programa de Doutorado em Matem\'{a}tica em Associação UFPA/UFAM, Universidade Federal do Par\'a \\
	66075-110, Bel\'em, Par\'a, Brazil.}
\email{diogomoan06@gmail.com}

\subjclass[2010]{Primary 53C25, 53C20, 53C21; Secondary 53C65}
\keywords{Einstein-type manifold; Ricci tensor; Einstein manifold}
\thanks{$^\ast$ Corresponding author.}

\begin{document}

\newcommand{\spacing}[1]{\renewcommand{\baselinestretch}{#1}\large\normalsize}
\spacing{1.2}

\begin{abstract}
In this article, we derive an integral formula involving the tensor $D_{ijk}$ for compact Einstein-type manifolds with constant scalar curvature. As an application, we classify three-dimensional compact Einstein-type manifolds satisfying the cyclic parallel Ricci tensor condition, obtaining rigidity results that extend and unify previous work in the literature.
\end{abstract}

\maketitle

\section{Introduction}\label{intro}
In this paper, \((M^n, g)\) denotes an \(n\)-dimensional, compact, connected, oriented smooth manifold with nonempty boundary \(\partial M\) and \(n \geq 3\). The study of Einstein metrics is a fruitful topic in differential geometry, and their generalizations have increasingly attracted the attention of the scientific community. In particular, Einstein manifolds play a central role due to their deep interplay with both mathematics and theoretical physics. In recent years, significant progress has been made toward the classification and geometric understanding of these spaces. For a comprehensive overview of the classical literature, we refer the reader to \cite{Besse}.

It is well known that every Einstein manifold has parallel Ricci tensor; however, the converse is not true in general, as illustrated by the standard cylinder. In the same direction, Gray \cite{Gray} introduced the notion of cyclic parallel Ricci tensor. More precisely, consider the \(3\)-tensor
\begin{equation}\label{TensorD}
D(X,Y,Z) = \nabla_X \operatorname{Ric}(Y,Z) + \nabla_Y \operatorname{Ric}(Z,X) + \nabla_Z \operatorname{Ric}(X,Y),  
\end{equation}
$\forall X,Y,Z \in \mathfrak{X}(M).$ A Riemannian manifold is said to have cyclic parallel Ricci tensor if \(D = 0\). Clearly, a parallel Ricci tensor implies \(D = 0\), but the converse does not hold in general (see \cite{Gray, Jelonek} for further details). A natural question arises: under what additional conditions does the converse hold, or does there exist a special geometric setting in which such an equivalence occurs?

In an attempt to address this question, we consider the so-called Einstein-type manifolds, which unify several well-known structures in the literature. These were introduced by Catino et al. \cite{Catino20}. Since then, many researchers have investigated their geometric properties, obtaining volume estimates and rigidity results; see, for instance, \cite{MBQ, AM24, Chen23, Chen24, Naza, L21, YH22, Yun}. Motivated by these works, we focus on a class of gradient Einstein-type metrics defined on manifolds with nonempty boundary, as described below.

\begin{definition}\label{def1}
An \textit{Einstein-type manifold} is a compact Riemannian manifold \((M^n, g)\), \(n \geq 3\), with smooth boundary \(\partial M\), for which there exist smooth functions \(f, h: M \to \mathbb{R}\) such that
\begin{equation}\label{fundEq}
\delta f \operatorname{Ric} + \nabla^2f = h g,
\end{equation}
where \(f > 0\) in \(\operatorname{int}(M)\), \(f^{-1}(0) = \partial M\), and \(h = \theta f + \gamma\) with constants \(\delta, \theta, \gamma \in \mathbb{R}\). Here, \(\operatorname{Ric}\) and \(\nabla^2f\) denote the Ricci tensor and the Hessian form on \(M^n\), respectively.
\end{definition}

We refer to equation \eqref{fundEq} as the fundamental equation of an Einstein-type manifold \((M^n, g, f, \delta, \gamma)\). This framework generalizes several important geometric equations. For instance, setting \(\delta = -1\) and \(u = -f\) yields \(f \operatorname{Ric} = \nabla^2 f + h g\), from which many relevant structures arise. In particular, if \(f\) is constant and \(\partial M = \emptyset\), then \((M^n, g)\) reduces to an Einstein manifold. Among the examples recovered by this formulation are the static vacuum Einstein equations. In the case of a null cosmological constant, they read
\[
f \operatorname{Ric} = \nabla^2 f, \quad \Delta f = 0,
\]
while for a non‑null cosmological constant they become
\[
f \operatorname{Ric} = \nabla^2 f + \frac{R f}{n-1} g, \quad \Delta f + \frac{R f}{n-1} = 0.
\]

For a static perfect fluid, the equations take the form
\[
f \operatorname{Ric} = \nabla^2 f + \frac{(\mu - \rho)f}{n-1} g, \quad \Delta f - \frac{(n-2)\mu + n\rho}{n-1} f = 0,
\]
where \(\mu\) and \(\rho\) denote the density and pressure, respectively, and satisfy the energy condition \(\mu \geq |\rho|\). Finally, the critical point equation of the total scalar curvature functional  is given by
\[
(1+f) \mathring{\operatorname{Ric}} = \nabla^2 f + \frac{R f}{n(n-1)} g, \quad \Delta f + \frac{R f}{n-1} = 0,
\]
with \(R\) denoting the scalar curvature (see \cite{MBQ, AM24} for more details).

In this context, it is worth recalling that critical metrics of the Einstein–Hilbert functional, also known as critical metrics of the total scalar curvature, are necessarily Einstein when considered over the space of all metrics. This result becomes even more refined when one restricts to the space of metrics with constant scalar curvature and unit volume. In this setting, it has been conjectured that every solution of the corresponding critical point equation is an Einstein metric (see \cite{Besse}).

A significant step in this direction was obtained in \cite{BS21}, where the authors proved the Besse conjecture for dimension three under the additional assumption of cyclic parallel Ricci tensor. More precisely, they established that, in this three‑dimensional context, critical metrics of the total scalar curvature functional with cyclic parallel Ricci tensor are indeed Einstein.

In another direction, Freitas and Gomes \cite{Naza} provided a classification of Einstein-type manifolds that are Einstein. Their result can be stated as follows.

\begin{theorem}\label{nazaTHM}\cite[Theorem 1, Theorem 2]{Naza}
Let \((M^n, g)\) be a compact gradient Einstein-type manifold with connected boundary \(\partial M\). If \((M^n, g)\) is an Einstein manifold, then it is isometric to a geodesic ball in a simply connected space form when \(\gamma \neq 0\), or to a hemisphere of a round sphere when \(\gamma = 0\).
\end{theorem}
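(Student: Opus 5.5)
The strategy is to reduce the fundamental equation \eqref{fundEq} to an Obata--Reilly type equation for $f$ and then use the boundary condition $f^{-1}(0)=\partial M$ to identify the model. Since $(M^n,g)$ is Einstein and $n\geq 3$, Schur's lemma gives $\operatorname{Ric}=\frac{R}{n}g$ with $R$ constant. Substituting into \eqref{fundEq} yields
\[
\nabla^2 f=\Big(h-\frac{\delta R}{n}f\Big)g=\Big(\big(\theta-\tfrac{\delta R}{n}\big)f+\gamma\Big)g=:(-\kappa f+\gamma)\,g .
\]
So $f$ is a concircular function whose Hessian is an affine function of $f$ times the metric. This is exactly the situation of the Obata--Tashiro--Reilly rigidity theorems on manifolds with boundary.

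Next I would get the boundary information. On $\partial M=f^{-1}(0)$ we have $f>0$ inside, so $\nabla f=-|\nabla f|\nu$ with $\nu$ the outward normal. Differentiating $|\nabla f|^2$ along $\partial M$ gives $\nabla_X|\nabla f|^2=2\nabla^2f(\nabla f,X)=2\gamma\langle\nabla f,X\rangle=0$ for tangent $X$, so $|\nabla f|$ is constant on each boundary component. The identity $\nabla(|\nabla f|^2+\kappa f^2-2\gamma f)=0$ then shows that $|\nabla f|^2+\kappa f^2-2\gamma f\equiv c$ on $M$.

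I then need $c>0$, i.e.\ $\nabla f\neq0$ on $\partial M$, which I would obtain by a case split.
\begin{itemize}
\item If $\gamma\neq0$, then $\Delta f=n\gamma$ on $\partial M$. Since $f$ has a strict minimum along $\partial M$, $\nabla f$ cannot vanish there, for otherwise $f$ would be identically zero by unique continuation for the ODE along geodesics. Integrating $\Delta f=-n\kappa f+n\gamma$ over $M$ and using Stokes also fixes signs: $\int_M\Delta f=-|\nabla f|\,|\partial M|<0$.
\item If $\gamma=0$, then $\nabla^2f=-\kappa fg$. Integrating gives $-\kappa\int_M f=\frac1n\int_M\Delta f<0$, hence $\kappa>0$.
\end{itemize}

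With this in hand I would analyze the flow of $\nabla f/|\nabla f|$ away from critical points. Because $f>0$ inside and $f=0$ on $\partial M$, $f$ attains an interior maximum at some point $p$. The Hessian there is $(-\kappa f(p)+\gamma)g$, which must be negative definite, otherwise $f$ would be locally affine along geodesics and could not reach zero appropriately. In geodesic polar coordinates around $p$, $f$ is then a radial function $\phi(r)$ solving $\phi''=-\kappa\phi+\gamma$ with $\phi'(0)=0$. The standard Tashiro argument shows that $g=dr^2+(\phi'(r)/\phi''(0))^2 g_{S^{n-1}}$ on the region $r<r_0$, where $\phi(r_0)=0$. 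The critical point is unique, since the level sets are distance spheres, and connectedness of $\partial M$ ensures the whole of $M$ is the geodesic ball $B(p,r_0)$.

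Finally I would identify the model. This metric is a geodesic ball in the space form of curvature $\kappa$ (sphere, Euclidean or hyperbolic). When $\gamma=0$, $\phi=A\cos(\sqrt\kappa r)$ vanishes exactly at $r_0=\pi/(2\sqrt\kappa)$, so the ball is a hemisphere. The main obstacle is the global step: ruling out critical points other than $p$, and making sure the ball exhausts $M$ up to $\partial M$. For this I would rely on the monotonicity of $\phi$ on $(0,r_0)$ coming from the ODE and on the first-integral identity $|\nabla f|^2+\kappa f^2-2\gamma f\equiv c$, which forces $\nabla f\neq0$ wherever $0<f<\max f$.
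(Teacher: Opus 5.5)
The paper does not prove this statement; it quotes it from Freitas--Gomes. Your route is the natural one: Schur reduces the equation to $\nabla^2 f=(-\kappa f+\gamma)g$ with $f|_{\partial M}=0$, and a Tashiro/Reilly-type rigidity argument finishes. However, your proof that $\nabla f\neq 0$ on $\partial M$ (i.e.\ $c>0$) fails as written.

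\emph{The case $\gamma\neq 0$.} You invoke unique continuation, but along a geodesic leaving a boundary critical point, the problem $\phi''=-\kappa\phi+\gamma$, $\phi(0)=\phi'(0)=0$ has a nonzero solution (e.g.\ $\frac{\gamma}{2}t^{2}$ when $\kappa=0$, or $\frac{\gamma}{\kappa}(1-\cos\sqrt{\kappa}\,t)$ when $\kappa>0$). So nothing forces $f\equiv 0$.

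\emph{The case $\gamma=0$.} The strict inequality $\int_M\Delta f<0$ presupposes the very nonvanishing you are trying to prove. Your unique-continuation argument is actually the correct one for $\gamma=0$, not for $\gamma\neq0$.

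\emph{A correct fix.} Evaluate your first integral at the maximum point $p$. This gives $c=m(\kappa m-2\gamma)$ with $m=f(p)$. Negative definiteness of $\nabla^2 f(p)$ gives $\kappa m>\gamma$. Hence $c>0$ whenever $\gamma\leq 0$, and $\kappa>0$ when $\gamma=0$. For $\gamma>0$, note that at a boundary critical point $\nabla^2 f=\gamma g$ is positive definite. So for any curve $\sigma$ in $\partial M$ through that point, $(f\circ\sigma)''(0)=\gamma|\sigma'(0)|^{2}>0$, which contradicts $f\circ\sigma\equiv 0$.

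Your global step is only sketched, and it is connectedness of $M$, not of $\partial M$, that closes it; connectedness of $\partial M$ then comes out as a consequence. Here is what that step needs:
\begin{itemize}
\item Once $c>0$, $\phi'<0$ on $(0,r_0]$. Geodesics from $p$ satisfy $f(\sigma(t))=\phi(t)>0$ for $t<r_0$, so they stay in $\mathrm{int}(M)$.
\item Hence $F=\exp_p\circ(\exp^{\kappa}_{o})^{-1}$ is a local isometry from the open model ball $B^{\kappa}(r_0)$ into $\mathrm{int}(M)$; there are no conjugate points since the warping function is positive.
\item The image of $F$ is open. It is also closed in $\mathrm{int}(M)$, because $f\circ F=\phi(|x|)$ and $\phi(r_0)=0$ keep preimages of interior limit points away from $|x|=r_0$. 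So $F$ maps onto $\mathrm{int}(M)$.
\item $F$ is injective. Two radial geodesics reaching the same $q\neq p$ do so at the same time, since $\phi$ is strictly decreasing. They also arrive with the same velocity $-\nabla f/|\nabla f|$, so they coincide. Finally, $F(x)=p$ forces $\phi'(|x|)=0$, i.e.\ $x=o$.
\end{itemize}
With these two repairs your argument proves the statement.
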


Recently, Andrade, Baltazar, and Queiroz \cite{MBQ} investigated a special class of gradient Einstein-type manifolds under the parallel Ricci curvature condition, obtaining rigidity results. In \cite{BS21}, the authors studied three well‑known structures, namely, positive static triples, critical metrics of the volume functional, and critical metrics of the total scalar curvature functional, under the hypothesis of cyclic parallel Ricci tensor. These structures can all be viewed as particular instances of Einstein-type manifolds.

Before presenting our results, it is essential to introduce the following notation: 
for a given potential function \(f\) on \(M^n\), we denote $
i_{\nabla f} W = W(\cdot, \cdot, \cdot, \nabla f).$
A Riemannian manifold \((M^n, g)\) is said to have \textit{zero radial Weyl curvature} 
if the Weyl tensor satisfies \(i_{\nabla f} W = 0\). This class includes, in particular, 
locally conformally flat manifolds. Catino \cite{Catino12} employed this additional hypothesis to classify generalized 
quasi‑Einstein metrics with harmonic Weyl tensor and showed that this condition cannot 
be omitted.

Motivated by the aforementioned results, we now state our first contribution, which is an integral formula for Einstein-type manifolds, inspired by \cite[Theorem 8]{BS21}.
\begin{theorem}\label{ThmIF}
Let $(M^{n},g,f,\delta,\gamma)$ be a compact gradient Einstein-type manifold with constant scalar curvature and $\delta\in\mathbb{R}\setminus\{-1,\frac{1}{n-2}\}$. Then, we have that 
\begin{eqnarray*}
	\left(\frac{3\delta-\frac{2n-1}{n-2}}{\delta-\frac{1}{n-2}}\right)\frac{\delta}{9}\int_{M}f|D|^{2}\,dM
	&=&\frac{2\delta\left(\delta - \frac{n-1}{n-2}\right)}{\delta-\frac{1}{n-2}}\int_{M}f|\nabla \operatorname{Ric}|^{2}\,dM \\[4pt]
	&&-n\gamma\delta\int_{M}|\mathring{\operatorname{Ric}}|^{2}\,dM \\[4pt]
	&&-\frac{1+3\delta}{3}\int_{M}D_{ijk}R_{jk}\nabla_{i} f\,dM \\[4pt]
	&&+\frac{2}{3(\delta-\frac{1}{n-2})}\int_{M}\langle C,i_{\nabla f}W\rangle\,dM,
\end{eqnarray*}
\end{theorem}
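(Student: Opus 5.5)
Our plan is to expand $|D|^2$ pointwise, multiply by $f$, and integrate by parts. We use the fundamental equation \eqref{fundEq} to trade second derivatives of $f$ for curvature, and the Weyl decomposition to express the curvature terms through the Cotton tensor $C$ and $i_{\nabla f}W$. Throughout, the scalar curvature $R$ is constant. Hence $\nabla_i R_{ij}=0$, and tracing \eqref{fundEq} gives $\Delta f = nh-\delta fR$. The $\mathring{\operatorname{Ric}}$-part of \eqref{fundEq} is $\nabla^2 f-\frac{\Delta f}{n}g=-\delta f\,\mathring{\operatorname{Ric}}$.

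\textbf{Step 1 (pointwise algebra).} By symmetry of $\operatorname{Ric}$,
\[
|D|^2 = 3|\nabla \operatorname{Ric}|^2 + 6\,\nabla_i R_{jk}\nabla_j R_{ik}.
\]
Since $\nabla_iR_{jk}-\nabla_jR_{ik}=C_{ijk}$ when $R$ is constant (up to the paper's sign and normalization conventions), the mixed term equals $|\nabla\operatorname{Ric}|^2-\nabla_iR_{jk}C_{ijk}$, and $\nabla_iR_{jk}C_{ijk}=\frac12|C|^2$. So $\int f|D|^2$ is a combination of $\int f|\nabla\operatorname{Ric}|^2$ and $\int f|C|^2$. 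A second route rewrites the mixed term as $\int f\,\nabla_iR_{jk}\nabla_jR_{ik}$, which we integrate by parts.

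\textbf{Step 2 (integration by parts).} Integrating $\int f\nabla_iR_{jk}\nabla_jR_{ik}$ by parts in $\nabla_j$ needs no boundary terms, since $f=0$ on $\partial M$. This produces two pieces.
\begin{itemize}
\item A term $-\int R_{ik}\nabla_j f\nabla_iR_{jk}$. Via the definition of $D$, this becomes $\int D_{ijk}R_{jk}\nabla_i f$ together with terms like $\int \nabla_i f\nabla_i|\operatorname{Ric}|^2$. The latter integrate by parts again into $\int \Delta f|\mathring{\operatorname{Ric}}|^2$, and $\Delta f$ is replaced by $nh-\delta fR$.
\item A commutator $\nabla_j\nabla_iR_{jk}=\nabla_i\nabla_jR_{jk}+(\text{Rm}*\operatorname{Ric})$. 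The first part vanishes. The curvature part is $R_{ij}R_{jk}R_{ik}$-type terms plus $R_{ijkl}R_{ik}R_{jl}$, multiplied by $f$.
\end{itemize}

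\textbf{Step 3 (cubic terms and the $\delta$-weights).} The cubic curvature terms $\int f\,\mathrm{tr}\operatorname{Ric}^3$ and $\int f W_{ijkl}R_{ik}R_{jl}$ are eliminated using a second identity. Take the divergence of \eqref{fundEq}:
\[
\delta R_{ij}\nabla_i f+\nabla_i\nabla_i\nabla_j f=\nabla_j h ,
\]
and commute derivatives to get
\[
(\delta-1)\operatorname{Ric}(\nabla f)=\nabla h-\nabla\Delta f=\theta\nabla f+\delta R\nabla f .
\]
Then compute $\operatorname{div}(f\,\cdot)$ of $C$ contracted with $\nabla f$. The standard Einstein-type identity is
\[
f\,C_{ijk}=\text{(terms in }T_{ijk})+\delta^{-1}\text{-weighted }W_{ijkl}\nabla_l f ,
\]
where $T$ is built from $R_{ik}\nabla_j f$ and $\operatorname{Ric}(\nabla f)$. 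This comes from differentiating \eqref{fundEq} and antisymmetrizing, and it is where the denominators $\delta-\frac1{n-2}$ arise. We pair it with $C$ and integrate. Then $\int f|C|^2$ gives $\int\langle C,i_{\nabla f}W\rangle$ plus terms reducible to $\int D_{ijk}R_{jk}\nabla_if$ and $\int\gamma|\mathring{\operatorname{Ric}}|^2$. The $\theta$ contributions combine with $\delta R$ and cancel against the $f$-weighted pieces after using $\Delta f=n(\theta f+\gamma)-\delta fR$. This is why only $\gamma$ survives.

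\textbf{Step 4 (assembly) and main obstacle.} Solve the resulting linear system for $\int f|D|^2$. The hypotheses $\delta\ne-1$ and $\delta\ne\frac1{n-2}$ are exactly the nondegeneracy conditions for dividing. I expect the hardest part to be the bookkeeping in Step 3: deriving the precise identity relating $fC$ to $T$ and $i_{\nabla f}W$ with the correct $\delta$-dependent coefficients, and then verifying that the cubic Ricci terms cancel exactly. I would first test the coefficients on the static case $\delta=-1$, as a limit, against \cite[Theorem 8]{BS21}.
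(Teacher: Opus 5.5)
Your Step~1 identity $|D|^2=9|\nabla\operatorname{Ric}|^2-3|C|^2$ and your integration by parts in Step~2 are correct and agree with the paper's \eqref{AuxD2}. Step~3, however, has a genuine gap: nothing in it eliminates the cubic term $K=\int_M f\,(R_{ij}R_{jk}R_{ik}-R_{ijkl}R_{ik}R_{jl})\,dM$ that Step~2 produces.

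The only concrete identity you invoke is $-\delta fC=i_{\nabla f}W+T$ paired with $C$. That is purely algebraic and gives only $-\delta f|C|^2=\langle C,i_{\nabla f}W\rangle+2(\frac{1}{n-2}-\delta)C_{ijk}R_{ik}\nabla_jf$ (up to the sign of the $W$-term), with no cubic curvature in it. You then have four relations:
\begin{itemize}
\item[(a)] Step~1;
\item[(b)] Step~2;
\item[(c)] the pointwise identity $D_{ijk}R_{jk}\nabla_if=\frac32\langle\nabla f,\nabla|\operatorname{Ric}|^2\rangle+2C_{ijk}R_{ik}\nabla_jf$;
\item[(d)] the $C$-pairing above.
\end{itemize}
Eliminating $\int f|C|^2$, $\int C_{ijk}R_{ik}\nabla_jf$, $\int\langle\nabla f,\nabla|\operatorname{Ric}|^2\rangle$ and $K$ uses up all four, so no relation among the quantities in the theorem survives. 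Moreover, $K$ appears in only one of them.

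What is missing is a second, independent integral identity in which $K$ appears with a different, $\delta$-dependent weight. The paper obtains it in Lemmas~\ref{keyDIV2}--\ref{LastLemma}. It computes $\operatorname{div}X$ for $X_i=R_{ik}R_{kj}\nabla_jf+R_{ijkl}R_{jk}\nabla_lf$ in two ways:
\begin{itemize}
\item directly, where $\nabla^2f=-\delta f\operatorname{Ric}+hg$ makes $K$ reappear multiplied by $\delta$;
\item after rewriting $R_{ijkl}\nabla_lf$ through Lemma~\ref{DeltaC}.
\end{itemize}
It then combines the result with the Baltazar--Ribeiro formula (Lemma~\ref{keyDIV1}) for $\operatorname{div}(f\nabla|\operatorname{Ric}|^2)$.

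Your source of the $\gamma$-term is also unsound. Integrating $\int_M\langle\nabla f,\nabla|\mathring{\operatorname{Ric}}|^2\rangle\,dM$ by parts leaves the boundary term $\int_{\partial M}|\mathring{\operatorname{Ric}}|^2\langle\nabla f,\nu\rangle$, which does not vanish in general. Indeed $f=0$ on $\partial M$ but $\nabla f\neq0$ there, so only divergences of fields carrying a factor $f$ integrate to zero. The paper never integrates this term by parts; it trades it for $D_{ijk}R_{jk}\nabla_if$ and $C_{ijk}R_{ik}\nabla_jf$ via \eqref{AuxD1}.

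In the paper the $\gamma$-term arises pointwise, in Lemma~\ref{keyID}. By Lemma~\ref{Eric}, $\operatorname{Ric}(\nabla f)=\lambda\nabla f$ with the constant $\lambda=\frac{\delta R-(n-1)\theta}{1+\delta}$. Hence the non-$f$-weighted divergence $\nabla_i(R_{ik}R_{jk}\nabla_jf)$ equals $\lambda^2\Delta f$, and \eqref{ricST} turns $\Delta f$ into $f|\mathring{\operatorname{Ric}}|^2$. After this all $\theta$-terms cancel and exactly $-n\gamma\delta|\mathring{\operatorname{Ric}}|^2$ remains. Your claimed $\theta$-cancellation has no mechanism without \eqref{ricST}.

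Finally, your divergence identity is wrong. The commutation gives $+\operatorname{Ric}(\nabla f)$, and $\nabla\Delta f=(n\theta-\delta R)\nabla f$. The correct identity is therefore $(1+\delta)\operatorname{Ric}(\nabla f)=(\delta R-(n-1)\theta)\nabla f$, not $(\delta-1)\operatorname{Ric}(\nabla f)=(\theta+\delta R)\nabla f$. This is exactly where the hypothesis $\delta\neq-1$ is used.
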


As a consequence of this formula, we obtain the following rigidity result for a three-dimensional Einstein-type manifold that satisfies the cyclic parallel Ricci condition. 

\begin{theorem}\label{Thm3D}
	Let $(M^{3},g,f,\delta,\gamma)$ be a compact gradient Einstein-type manifold with $\delta\in\mathbb{R}\setminus\{-1,0,1\}$ and satisfying the cyclic parallel Ricci tensor condition. Then:
	\begin{itemize}
		\item[(i)] if $\gamma=0$ and $\delta\neq2$, then $(M^{3},g)$ has parallel Ricci tensor;
		\item[(ii)] if $\gamma>0$ and $1<\delta\leq2$, then $(M^{3},g)$ is an Einstein manifold;
		\item[(iii)] if $\gamma<0$ and $\delta<1$ or $\delta\geq2$, then $(M^{3},g)$ is an Einstein manifold.
	\end{itemize}
\end{theorem}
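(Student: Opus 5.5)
The plan is to specialize the integral formula of Theorem \ref{ThmIF} to $n=3$ and use the hypothesis $D=0$ to kill all but two terms; the sign of what remains then forces the conclusion in each case. Before Theorem \ref{ThmIF} can be applied, its constant scalar curvature hypothesis has to be checked. I would get it from $D=0$ by tracing. Taking $Y=Z=e_j$ and summing in \eqref{TensorD} gives
\[
0=\nabla_i R+2\,\nabla_j R_{ij}=\nabla_i R+\nabla_i R=2\nabla_i R,
\]
by the contracted second Bianchi identity. So $R$ is constant and Theorem \ref{ThmIF} applies whenever $\delta\notin\{-1,1\}$, since for $n=3$ we have $\frac{1}{n-2}=1$.

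Next I substitute $n=3$, so that $\frac{1}{n-2}=1$ and $\frac{n-1}{n-2}=2$. In dimension three the Weyl tensor vanishes identically, so $i_{\nabla f}W=0$ and the last term drops out. The hypothesis $D=0$ removes both the left-hand side and the term $\int_M D_{ijk}R_{jk}\nabla_i f\,dM$. The formula reduces to
\[
0=\frac{2\delta(\delta-2)}{\delta-1}\int_M f|\nabla \operatorname{Ric}|^2\,dM-3\gamma\delta\int_M|\mathring{\operatorname{Ric}}|^2\,dM .
\]
Since $\delta\neq0$, I divide by $\delta$ to obtain
\[
\frac{2(\delta-2)}{\delta-1}\int_M f|\nabla \operatorname{Ric}|^2\,dM=3\gamma\int_M|\mathring{\operatorname{Ric}}|^2\,dM .
\]
Throughout, $f>0$ in the interior, so $\int_M f|\nabla\operatorname{Ric}|^2\,dM\ge 0$, with equality only if $\nabla\operatorname{Ric}=0$ on $\operatorname{int}(M)$, hence on $M$ by continuity.

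The three cases are then a sign analysis of this identity.
\begin{itemize}
\item[(i)] If $\gamma=0$ and $\delta\neq2$, the coefficient $\frac{2(\delta-2)}{\delta-1}$ is nonzero. Hence $\int_M f|\nabla\operatorname{Ric}|^2\,dM=0$, and the Ricci tensor is parallel.
\item[(ii)] If $\gamma>0$ and $1<\delta\le2$, then $\frac{\delta-2}{\delta-1}\le0$. The left side is $\le0$ while the right side is $\ge0$, so both vanish. Therefore $\mathring{\operatorname{Ric}}=0$ and $M$ is Einstein.
\item[(iii)] If $\gamma<0$ and either $\delta<1$ or $\delta\ge2$, then $\frac{\delta-2}{\delta-1}\ge0$, with $\delta\neq-1$ excluded by hypothesis. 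The left side is $\ge0$ while the right side is $\le0$, so again $\mathring{\operatorname{Ric}}=0$.
\end{itemize}

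I expect no real obstacle here, since the heavy lifting is in Theorem \ref{ThmIF}. The only points needing care are verifying constant scalar curvature from $D=0$ and tracking the sign of $\frac{\delta-2}{\delta-1}$ in each range of $\delta$, including the borderline value $\delta=2$.
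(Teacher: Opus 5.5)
Your proposal is correct and follows the paper's proof: specialize Theorem~\ref{ThmIF} to $n=3$, use $D=0$ and $W\equiv 0$ to reduce it to $\frac{2(\delta-2)}{\delta-1}\int_M f|\nabla \operatorname{Ric}|^2\,dM = 3\gamma\int_M|\mathring{\operatorname{Ric}}|^2\,dM$, then do the sign analysis case by case. Your trace argument showing that $D=0$ forces $\nabla R=0$ is a useful addition, since the paper applies Theorem~\ref{ThmIF} (which assumes constant scalar curvature) without explaining why that hypothesis holds.
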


The next result is a direct consequence of Theorems~\ref{nazaTHM} and \ref{Thm3D}.

\begin{corollary}
	Let $(M^{3},g,f,\delta,\gamma)$ be a compact gradient Einstein-type manifold with connected boundary $\partial M$ and $\delta\in\mathbb{R}\setminus\{-1,0,1\}.$ Assume that either $\gamma>0$ and $1<\delta\leq2$, or $\gamma<0$ and $\delta<1$ or $\delta\geq2.$ If $(M^{3},g)$ satisfies the cyclic parallel Ricci tensor condition, then $(M^{3},g)$ is isometric to a geodesic ball in a simply connected space form.
\end{corollary}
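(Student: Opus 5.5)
The plan is to chain the two theorems already stated: Theorem~\ref{Thm3D} gives that the metric is Einstein, and Theorem~\ref{nazaTHM} then classifies it. I would not compute anything new; the work is in checking that each theorem's hypotheses hold and that the correct branch of the conclusion applies.

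First, $(M^{3},g,f,\delta,\gamma)$ is a compact gradient Einstein-type manifold with $\delta\in\mathbb{R}\setminus\{-1,0,1\}$ and cyclic parallel Ricci tensor, so Theorem~\ref{Thm3D} applies. The parameter assumptions are exactly those of items (ii) and (iii) of Theorem~\ref{Thm3D}:
\begin{itemize}
\item if $\gamma>0$ and $1<\delta\leq 2$, item (ii) shows that $(M^{3},g)$ is Einstein;
\item if $\gamma<0$ and either $\delta<1$ or $\delta\geq 2$, item (iii) gives the same conclusion.
\end{itemize}
Item (i), the case $\gamma=0$, is excluded and is not needed. So in every admissible case $(M^{3},g)$ is an Einstein manifold.

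Second, the boundary $\partial M$ is assumed connected, so Theorem~\ref{nazaTHM} (that is, \cite[Theorems 1 and 2]{Naza}) applies to the Einstein manifold $(M^{3},g)$. It gives two alternatives: a geodesic ball in a simply connected space form when $\gamma\neq 0$, or a hemisphere of a round sphere when $\gamma=0$. In both of our cases $\gamma\neq 0$, since $\gamma>0$ or $\gamma<0$. Hence the first alternative holds, and $(M^{3},g)$ is isometric to a geodesic ball in a simply connected space form.

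There is no real obstacle here. The only step needing care is confirming that the standing hypotheses of Theorem~\ref{nazaTHM} match ours: the same Einstein-type structure with $f>0$ in the interior, $f^{-1}(0)=\partial M$, and connected boundary. Connectedness of $\partial M$ is used only in this second step and is not required by Theorem~\ref{Thm3D}.
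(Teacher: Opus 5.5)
Your proposal is correct and follows the paper's argument: the paper likewise obtains the corollary directly from items (ii) and (iii) of Theorem~\ref{Thm3D}, which show that $g$ is Einstein, and then applies Theorem~\ref{nazaTHM} in the branch $\gamma\neq 0$.
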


Before announcing our last result, it is important to recall a result due independently to Kobayashi and Lafontaine. This result provides the classification for positive static triples that are locally conformally flat. For our purposes, we will state it in dimension 3.

\begin{theorem}[Kobayashi \cite{kobayashi}, Lafontaine \cite{lafontaine}]\label{KL}
	Let $(M^{3},g,f)$ be a $3$-dimensional positive static triple with scalar curvature $R=6$. Suppose that $(M^{3},g)$ has parallel Ricci tensor. Then $(M^{3},g,f)$ is covered by a static triple equivalent to one of the following two static triples:
	\begin{enumerate}
		\item The standard hemisphere with canonical metric $(\mathbb{S}^{3}_{+},g_{\mathbb{S}^{3}})$;
		\item The standard cylinder over $\mathbb{S}^{2}$ with the product metric
		$$\Big(M=\Big[0,\frac{\pi}{\sqrt{3}}\Big]\times\mathbb{S}^{2},\; g=dt^{2}+\frac{1}{3}g_{\mathbb{S}^{2}}\Big).$$
	\end{enumerate}
\end{theorem}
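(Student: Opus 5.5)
The plan is to use the parallel Ricci tensor to split $M^3$ locally as a Riemannian product, and then solve the static equations explicitly on each factor. With $n=3$ and $R=6$ the positive static triple equations read
\[
f\operatorname{Ric}=\nabla^{2}f+3fg,\qquad \Delta f+3f=0 .
\]
Since $\nabla\operatorname{Ric}=0$, the eigenvalues of $\operatorname{Ric}$ are constant and its eigendistributions are parallel. By the de Rham decomposition theorem, the universal cover of $\operatorname{int}(M)$ therefore splits as a product of the eigen-factors. On each factor the Ricci tensor is a constant multiple of the metric. I will go through the possible eigenvalue multiplicities in dimension three.

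\emph{Case 1: one eigenvalue.} Then $\operatorname{Ric}=2g$. In dimension three an Einstein metric has constant sectional curvature, here equal to $1$. The static equation becomes $\nabla^{2}f=-fg$ with $f>0$ inside and $f=0$ on $\partial M$. By Theorem~\ref{nazaTHM} in the case $\gamma=0$, or directly by an Obata-type argument along geodesics normal to $\partial M$, $(M,g,f)$ is the hemisphere with $f$ the height function.

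\emph{Case 2: three distinct eigenvalues.} Then the cover splits into three one-dimensional factors, so it is flat. This forces $R=0$, which contradicts $R=6$.

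\emph{Case 3: multiplicities $1$ and $2$.} The cover is locally $I\times N^{2}$ with metric $dt^{2}+g_N$. The line factor has $\operatorname{Ric}(\partial_t,\partial_t)=0$, and the surface has Gaussian curvature $K$, so $\operatorname{Ric}=K g_N$ on $TN$. From $R=2K=6$ we get $K=3$. I then plug the three types of pairs into the static equation.
\begin{itemize}
\item On $(\partial_t,\partial_t)$ it gives $f_{tt}=-3f$.
\item On $TN$ it gives $\nabla^{2}f|_{N}=0$.
\item On mixed pairs it gives $\partial_t(Xf)=0$ for $X$ tangent to $N$.
\end{itemize}
Together these say $f=a(t)+b(x)$ with $\nabla^{2}_{N}b=0$. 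Then $\nabla b$ is a parallel vector field on a surface of curvature $3$. A nonzero parallel field would make the curvature vanish, so $\nabla b=0$ and $b$ is constant.

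Hence $f=A\sin(\sqrt3\,t)+B\cos(\sqrt3\,t)$, after absorbing the constant. The conditions $f>0$ in the interior and $f^{-1}(0)=\partial M$ force the $t$-interval to be a full positive half-period. After translating $t$, this gives $f=A\sin(\sqrt3\,t)$ on $[0,\pi/\sqrt3]$. Finally $N$ is complete, since $M$ is compact, and has curvature $3$, so it is covered by $\big(\mathbb{S}^2,\tfrac13 g_{\mathbb{S}^2}\big)$. This yields the cylinder in the statement.

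The main obstacle is passing from these local or universal-cover splittings to the global conclusion "covered by". De Rham applies cleanly to complete manifolds without boundary, while $M$ has boundary. I would handle this by working on $\operatorname{int}(M)$, using that $f$ and the splitting extend analytically up to $\partial M$ (static metrics are analytic). I would also use that the gradient lines of $f$ are exactly the $t$-lines, which are geodesics meeting $\partial M$ orthogonally. These facts should let the splitting be integrated globally on the cover, with $\partial M$ corresponding to $\{0,\pi/\sqrt3\}\times\mathbb{S}^2$.
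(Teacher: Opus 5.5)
The paper does not prove this statement; it quotes it from Kobayashi and Lafontaine. Their argument covers the larger class of locally conformally flat static triples (in dimension three, $\nabla\operatorname{Ric}=0$ already gives $C=0$): the metric is shown to be a warped product over the level sets of $f$, the resulting ODEs are solved, and the parallel Ricci condition then leaves only the hemisphere and the cylinder. Your route through the holonomy splitting of a parallel Ricci tensor is more direct, and your local computations are correct. In Case 3 the eigenvalues are $0$ and $3$, $f_{tt}=-3f$, $\nabla^2_N b=0$, and $b$ is constant because $K=3\neq 0$.

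\textbf{The gap is the global step.} This is exactly where the words ``covered by'' and the length $\pi/\sqrt3$ come from. De Rham's theorem needs completeness, so your assertion that the universal cover of $\operatorname{int}(M)$ splits is unjustified. Your last paragraph only says the listed facts ``should let'' the splitting be globalized. Two of those ingredients are also misdirected. Analyticity plays no role. The gradient lines of $f$ stop at the critical set $\{f=\max f\}$, which is the entire middle slice $t=\pi/(2\sqrt3)$, so they cannot carry the product structure across $M$.

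\textbf{Closing Case 3 without de Rham.} Let $L$ be the $0$-eigenline of $\operatorname{Ric}$ and $T$ a local unit section. Since $L$ and $L^\perp$ are parallel, $T$ is parallel, so $R(\cdot,\cdot)T=0$ and $\sec(L^\perp)=3$. Moreover $\operatorname{Ric}=3(g-T\otimes T)$, so the static equation reads $\nabla^2 f=-3f\,T\otimes T$.

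Hence the $L^\perp$-component of $\nabla f$ is a parallel vector field. It must vanish because $\sec(L^\perp)\neq 0$, so $\nabla f\in L$ on all of $M$. This is the global version of your ``$b$ is constant''.

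It follows that $\nabla(|\nabla f|^2+3f^2)=0$, so $|\nabla f|^2+3f^2\equiv c^2$ with $c>0$. Thus $\partial M$ is a regular level set whose inward unit normal $N$ is a section of $L$. In particular each boundary component $\Sigma_0$ is totally geodesic with Gauss curvature $3$.

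Set $\Phi(t,x)=\exp_x(tN(x))$ for $x\in\Sigma_0$. These geodesics stay tangent to $L$, so along them $f''=-3f$, $f(0)=0$, $f'(0)=c$, i.e. $f=\frac{c}{\sqrt3}\sin(\sqrt3\,t)$. Since $f>0$ for $0<t<\pi/\sqrt3$, they remain in $\operatorname{int}(M)$. By compactness they are defined up to $t=\pi/\sqrt3$, where they reach $\partial M$.

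Since $R(\cdot,\cdot)T=0$ and $\nabla_vN=0$ for $v\in T\Sigma_0$, the Jacobi fields $d\Phi(v)$ are parallel, so $\Phi^*g=dt^2+g_{\Sigma_0}$. Thus $\Phi:[0,\pi/\sqrt3]\times\Sigma_0\to M$ is a local isometry from a compact manifold, mapping interior to interior and boundary to boundary. Its image is open and closed, so $\Phi$ is a surjective covering. Replacing $\Sigma_0$ by $\mathbb{S}^2$ when $\Sigma_0\cong\mathbb{RP}^2$ gives the cylinder.

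\textbf{Case 1.} Use the Obata argument rather than Theorem~\ref{nazaTHM}, which as quoted assumes $\partial M$ connected. Here $|\nabla f|^2+f^2$ is constant, and $\exp_p$ from the maximum point $p$ of $f$ does the same job.
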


Therefore, considering the ideas developed in \cite[Theorem 2]{MBQ} jointly with the classification obtained in \cite{BS21}, we obtain the following result for an Einstein-type manifold with $\delta=-1$.

\begin{theorem}\label{ThmDelta-1}
	Let $(M^{n},g,f,\delta,\gamma)$ be a compact gradient Einstein-type manifold with $\delta=-1$, constant scalar curvature, and satisfying the cyclic parallel Ricci tensor condition. Then the following assertions hold:
	\begin{itemize}
		\item[(i)] if $\gamma=0$ and $n=3$, then $(M^{3},g)$ is covered by one of the static triples described in Theorem~\ref{KL};
		\item[(ii)] if $\gamma<0$, $n=3$, and $R>0$, then $(M^{3},g)$ is isometric to a geodesic ball in $\mathbb{S}^{3}$;
		\item[(iii)] if $\gamma<0$ and $R\leq0$, then $(M^{n},g)$ is isometric to a geodesic ball in $\mathbb{R}^{n}$ or $\mathbb{H}^{n}$.
	\end{itemize}
\end{theorem}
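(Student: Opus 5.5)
The plan is to reduce everything to the three classical structures treated in \cite{BS21} and \cite{MBQ}. First I identify what the fundamental equation becomes when $\delta=-1$, and then I quote the known classifications.

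\textbf{Step 1: identify the structure.} With $\delta=-1$, equation \eqref{fundEq} reads $f\operatorname{Ric}=\nabla^2 f-(\theta f+\gamma)g$.

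Take the divergence of both sides, using $R$ constant, $\operatorname{div}\operatorname{Ric}=\frac12\nabla R=0$ and $\operatorname{div}\nabla^2 f=\operatorname{Ric}(\nabla f)+\nabla\Delta f$. The terms $\operatorname{Ric}(\nabla f)$ cancel, and we get $\nabla\Delta f=\theta\nabla f$, hence $\Delta f=\theta f+c$ for some constant $c$.

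Comparing with the trace, $\Delta f=(R+n\theta)f+n\gamma$, gives $c=n\gamma$ and $\theta=-\frac{R}{n-1}$. (Here I use that $f$ is nonconstant, which holds since $f>0$ inside and $f=0$ on $\partial M$.) Therefore
\[
f\operatorname{Ric}=\nabla^2 f+\Big(\frac{Rf}{n-1}-\gamma\Big)g,\qquad \Delta f=-\frac{R}{n-1}f+n\gamma .
\]

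\textbf{Step 2: the case $\gamma=0$.} Now $(M,g,f)$ is a static triple with $f>0$ in the interior and $f=0$ on $\partial M$.

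To see that $R>0$, integrate $\Delta f=-\frac{R}{n-1}f$ over $M$. The boundary term $\int_{\partial M}\partial_\nu f$ is negative, since $f$ vanishes on $\partial M$ and is positive inside, and a regular value argument gives $|\nabla f|\neq0$ on $\partial M$. This forces $R>0$, so $(M,g,f)$ is a positive static triple.

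After scaling to $R=6$, the three-dimensional result of \cite{BS21} for positive static triples says that the cyclic parallel Ricci condition implies parallel Ricci tensor. Theorem~\ref{KL} then gives the covering statement in (i).

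\textbf{Step 3: the case $\gamma<0$.} Set $\tilde f=-\frac{f}{(n-1)\gamma}>0$. Then
\[
\Delta\tilde f=-\frac{R}{n-1}\tilde f-\frac{n}{n-1},
\]
and the equation becomes $-(\Delta\tilde f)g+\nabla^2\tilde f-\tilde f\operatorname{Ric}=g$. This is exactly the Miao--Tam critical metric of the volume functional with $\tilde f|_{\partial M}=0$.

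For (ii), with $n=3$ and $R>0$, the classification in \cite{BS21} of volume-critical metrics with cyclic parallel Ricci tensor yields a geodesic ball in $\mathbb{S}^3$.

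For (iii), with $R\le0$ and arbitrary $n$, I would follow the argument of \cite[Theorem 2]{MBQ}, which handles the parallel Ricci case. The extra work is to upgrade $D=0$ to $\nabla\operatorname{Ric}=0$.

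\textbf{Main obstacle: upgrading $D=0$ to $\nabla\operatorname{Ric}=0$ in (iii).} My approach is as follows.

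First I would contract $D_{ijk}=0$ against $R_{jk}\nabla_i f$ and against $f\nabla_i R_{jk}$. Using constancy of $R$, the commutation formula for $\nabla^2 f$, and the fundamental equation, this expresses $\int f|\nabla\operatorname{Ric}|^2$ in terms of $\int|\mathring{\operatorname{Ric}}|^2$, of $R$, and of curvature terms. This is a Bochner-type identity analogous to Theorem~\ref{ThmIF} with $\delta=-1$, which is the excluded value there, so the identity must be derived directly.

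The expected sign structure is that for $R\le0$ and $\gamma<0$ the right-hand side is nonpositive. That would force $\nabla\operatorname{Ric}=0$ in the interior, and then $\mathring{\operatorname{Ric}}=0$ via the \cite{MBQ} argument. With $g$ Einstein and $\gamma\neq0$, Theorem~\ref{nazaTHM} gives a geodesic ball in a space form, and $R\le0$ excludes $\mathbb{S}^n$, leaving $\mathbb{R}^n$ or $\mathbb{H}^n$.

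If the sign of the curvature term (a Weyl or $\operatorname{Ric}^3$ term) does not cooperate in general dimension, I would fall back on the boundary-integral technique of Miao--Tam/\cite{MBQ}. That technique bounds $\int_M \tilde f|\mathring{\operatorname{Ric}}|^2$ using $R\le0$ together with the boundary identity $\mathring{\operatorname{Ric}}(\nu,\nu)$-terms on $\partial M$.
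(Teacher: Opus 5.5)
\textbf{Where the proposal matches the paper.} Your Steps 1 and 2 and your treatment of (ii) match the paper. The paper also forces $\theta=-R/(n-1)$: it reads this off (\ref{AuxERic}) at $\delta=-1$, which is equivalent to your divergence computation. It then rewrites the equation as the $V$-static equation $-\Delta f\,g+\nabla^2 f-f\operatorname{Ric}=-(n-1)\gamma\,g$ and quotes \cite[Theorem 4]{BS21} for (i) and \cite[Theorem 6]{BS21} for (ii).

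\textbf{The gap is in (iii).} The paper closes (iii) by quoting \cite[Theorem 7]{BS21}, which classifies $V$-static (Miao--Tam) metrics with cyclic parallel Ricci tensor and $R\le 0$ in every dimension. You do not invoke such a result. What you put in its place is an ``expected sign structure'' plus an unspecified fallback. So the implication from $D=0$ to Einstein is never established for $n\ge 4$.

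\textbf{Your sketched route fails as stated.} Carrying out your computation is exactly the derivation of Theorem~\ref{ThmIF}, and that derivation goes through at $\delta=-1$. Once $\theta=-R/(n-1)$, the conclusion of Lemma~\ref{keyID} holds at $\delta=-1$ by direct check, and the remaining steps use only $\delta\neq\frac{1}{n-2}$. With $D=0$ it gives
\[
0=-\frac{2(2n-3)}{n-1}\int_M f|\nabla\operatorname{Ric}|^2\,dM+n\gamma\int_M|\mathring{\operatorname{Ric}}|^2\,dM+c_n\int_M\langle C,i_{\nabla f}W\rangle\,dM,\qquad c_n\neq 0.
\]
The cubic terms $\operatorname{tr}(\mathring{\operatorname{Ric}}^3)$, $R|\mathring{\operatorname{Ric}}|^2$ and $W_{ijkl}R_{ik}R_{jl}$ cancel in this derivation. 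Hence $R$ does not appear, and the hypothesis $R\le 0$ cannot be used. Meanwhile the radial Weyl term has no sign for $n\ge 4$. The identity closes only when $n=3$, where it gives Einstein for every $\gamma<0$.

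\textbf{How to finish.} To complete (iii), apply \cite[Theorem 7]{BS21} to your rescaled $\tilde f$. Concluding through Theorem~\ref{nazaTHM}, as you propose, would also import a connected-boundary hypothesis that (iii) does not assume.
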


\section{Preliminaries}
\label{Preliminaries}

In this section, we review some basic facts and present some key results that will play a crucial role in the proof of our main results. We start by recalling that on a Riemannian manifold $(M^n,g)$, $n\geq 3$, the Riemann curvature tensor $Rm$ is given by the decomposition formula
\begin{equation}\label{weyl}
R_{ijkl}=W_{ijkl}+\frac{1}{n-2}(Ric\varowedge g)_{ijkl}-\frac{R}{2(n-1)(n-2)}(g\varowedge g)_{ijkl},
\end{equation}
where the symbol $\varowedge$ denotes the Kulkarni–Nomizu product defined for any two symmetric $(0,2)$-tensors $S$ and $T$ as follows:
$$(S\varowedge T)_{ijkl}=S_{ik}T_{jl}+S_{jl}T_{ik}-S_{il}T_{jk}-S_{jk}T_{il}.$$

Next, we have the Cotton tensor $C$, which is given by
\begin{equation}\label{cotton}
\displaystyle{C_{ijk}=\nabla_{i}R_{jk}-\nabla_{j}R_{ik}-\frac{1}{2(n-1)}\big(\nabla_{i}R\,g_{jk}-\nabla_{j}R\,g_{ik}\big).}
\end{equation}

It is easy to check that $C_{ijk}$ is skew-symmetric in the first two indices and trace-free in any two indices. 

We now obtain a result that is simple to verify but is essential for our results.

\begin{lemma}\label{riemannAUX}
	Let $(M^{n},g)$ be a Riemannian manifold. Then we have:
	\begin{itemize}
		\item[(i)] $R_{ij}R_{ik}R_{jk}=\operatorname{tr}(\mathring{Ric}^{3})+\dfrac{3}{n}R|\mathring{Ric}|^{2}+\dfrac{R^{3}}{n^{2}}$;
		\item[(ii)] $R_{ij}R_{ik}R_{jk}-R_{ijkl}R_{ik}R_{jl}=\dfrac{n}{n-2}\operatorname{tr}(\mathring{Ric}^{3})+\dfrac{R}{n-1}|\mathring{Ric}|^{2}-W_{ijkl}R_{ik}R_{jl}$.
	\end{itemize}
\end{lemma}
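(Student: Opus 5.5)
The proof is a direct algebraic computation. We substitute the traceless decomposition $R_{ij}=\mathring{R}_{ij}+\frac{R}{n}g_{ij}$ into each cubic expression and expand, using repeatedly that $\mathring{R}_{ii}=0$. For the Riemann term we then use the decomposition \eqref{weyl}.

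For (i), write $R_{ij}R_{ik}R_{jk}=\operatorname{tr}(Ric^{3})$ and expand $(\mathring{Ric}+\frac{R}{n}g)^{3}$. The cubic term gives $\operatorname{tr}(\mathring{Ric}^{3})$. The terms quadratic in $\mathring{Ric}$ appear three times, each contributing $\frac{R}{n}|\mathring{Ric}|^{2}$. The terms linear in $\mathring{Ric}$ are multiples of $\operatorname{tr}\mathring{Ric}=0$. The last term is $\frac{R^{3}}{n^{3}}\operatorname{tr}g=\frac{R^{3}}{n^{2}}$. This gives (i) immediately.

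For (ii), the main step is to compute $R_{ijkl}R_{ik}R_{jl}$ via \eqref{weyl}. Contracting the Kulkarni–Nomizu products against $R_{ik}R_{jl}$ gives
\[
(Ric\varowedge g)_{ijkl}R_{ik}R_{jl}=2R|Ric|^{2}-2\operatorname{tr}(Ric^{3}),
\qquad
(g\varowedge g)_{ijkl}R_{ik}R_{jl}=2R^{2}-2|Ric|^{2}.
\]
To see this, expand each of the four products of $g$ and $Ric$ terms and contract indices. For example, $R_{ik}g_{jl}R_{ik}R_{jl}=R|Ric|^{2}$, and $R_{il}g_{jk}R_{ik}R_{jl}=R_{il}R_{ik}R_{kl}=\operatorname{tr}(Ric^{3})$. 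Hence
\[
R_{ijkl}R_{ik}R_{jl}=W_{ijkl}R_{ik}R_{jl}+\frac{2}{n-2}\big(R|Ric|^{2}-\operatorname{tr}(Ric^{3})\big)-\frac{R}{(n-1)(n-2)}\big(R^{2}-|Ric|^{2}\big).
\]
Since the sign convention in \eqref{weyl} is what fixes the signs here, I would double-check these contractions against that convention.

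Next we subtract this from $\operatorname{tr}(Ric^{3})$, which gives $\frac{n}{n-2}\operatorname{tr}(Ric^{3})$ plus lower-order terms. We then rewrite everything with $|Ric|^{2}=|\mathring{Ric}|^{2}+\frac{R^{2}}{n}$ and with part (i) for $\operatorname{tr}(Ric^{3})$. The main obstacle is purely bookkeeping: checking that the coefficients of $R^{3}$ cancel and that the $R|\mathring{Ric}|^{2}$ coefficients combine to $\frac{1}{n-1}$.

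For the $R|\mathring{Ric}|^{2}$ coefficient:
\[
\frac{3}{n-2}-\frac{2}{n-2}-\frac{1}{(n-1)(n-2)}=\frac{n-2}{(n-1)(n-2)}=\frac{1}{n-1}.
\]
For the $R^{3}$ coefficient:
\[
\frac{1}{n(n-2)}-\frac{2}{n(n-2)}+\frac{1}{(n-1)(n-2)}\Big(1-\frac1n\Big)=\frac{-1+1}{n(n-2)}=0.
\]
The Weyl term enters with a minus sign, which yields exactly the formula in (ii).
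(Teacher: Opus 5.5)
Your proposal is correct and is the same direct verification the paper has in mind. The paper calls the lemma ``simple to verify'' and omits the proof; you expand $Ric=\mathring{Ric}+\frac{R}{n}g$ and contract the decomposition \eqref{weyl} against $R_{ik}R_{jl}$. Your two Kulkarni--Nomizu contractions and your coefficient bookkeeping for $R|\mathring{Ric}|^{2}$ (giving $\frac{1}{n-1}$) and $R^{3}$ (giving $0$) are all consistent with the convention in \eqref{weyl}, which yields $R_{ijil}=R_{jl}$.
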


For what follows, let us recall the commutation formula for the Ricci tensor. In fact, for any Riemannian manifold $M^{n}$, we have
\begin{equation}\label{CRicci}
	\nabla_{i}\nabla_{j}R_{kl}-\nabla_{j}\nabla_{i}R_{kl}=R_{ijks}R_{sl}+R_{ijls}R_{ks}.
\end{equation}

The next result was obtained by the first author and Ribeiro Jr. in \cite{balt18}.

\begin{lemma}\label{keyDIV1}
	Let $(M^{n},g)$ be a Riemannian manifold with constant scalar curvature and let $f:M\rightarrow\mathbb{R}$ be a smooth function defined on $M$. Then we have
	\begin{eqnarray*}
		\frac{1}{2}\operatorname{div}(f\nabla|Ric|^{2})&=&-\frac{f}{2}|C_{ijk}|^{2}+f|\nabla Ric|^{2}+\frac{1}{2}\langle\nabla f, \nabla|Ric|^{2}\rangle\\
		&&+\nabla_{i}(fC_{ijk}R_{jk})+C_{ijk}R_{ik}\nabla_{j}f\\
		&&+\frac{n}{n-2}f\operatorname{tr}(\mathring{Ric}^{3})+\frac{1}{n-1}fR|\mathring{Ric}|^{2}-fW_{ijkl}R_{ik}R_{jl}.
	\end{eqnarray*}
\end{lemma}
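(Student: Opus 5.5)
The plan is a weighted Bochner-type computation. We expand the divergence, express $\Delta R_{jk}$ through the Cotton tensor and the commutation formula \eqref{CRicci}, and then reorganize the curvature terms using Lemma~\ref{riemannAUX}(ii). Throughout, $R$ is constant, so $\nabla_i R_{ik}=\frac12\nabla_k R=0$ by the contracted Bianchi identity. By \eqref{cotton}, the Cotton tensor also simplifies to $C_{ijk}=\nabla_i R_{jk}-\nabla_j R_{ik}$.

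\textbf{Step 1 (divergence and Laplacian).} First,
$$\tfrac12\operatorname{div}(f\nabla|Ric|^2)=\tfrac f2\Delta|Ric|^2+\tfrac12\langle\nabla f,\nabla|Ric|^2\rangle .$$
Moreover, $\tfrac12\Delta|Ric|^2=|\nabla Ric|^2+R_{jk}\Delta R_{jk}$. This already produces the terms $f|\nabla Ric|^2$ and $\tfrac12\langle\nabla f,\nabla|Ric|^2\rangle$, so it remains to treat $fR_{jk}\Delta R_{jk}$.

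\textbf{Step 2 (rewriting $\Delta R_{jk}$).} We write
$$\Delta R_{jk}=\nabla_i\nabla_i R_{jk}=\nabla_i C_{ijk}+\nabla_i\nabla_j R_{ik}.$$
Applying \eqref{CRicci} and using $\nabla_j\nabla_i R_{ik}=0$, we get
$$\nabla_i\nabla_j R_{ik}=R_{ijis}R_{sk}+R_{ijks}R_{is}.$$
Contracting with $R_{jk}$, the first term becomes (with the paper's sign convention, $R_{ijis}$ is a Ricci contraction) $R_{ij}R_{ik}R_{jk}$. The second becomes $-R_{ijkl}R_{ik}R_{jl}$ after relabeling and using the curvature symmetries. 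Lemma~\ref{riemannAUX}(ii) then converts this combination into
$$\frac{n}{n-2}\operatorname{tr}(\mathring{Ric}^3)+\frac{R}{n-1}|\mathring{Ric}|^2-W_{ijkl}R_{ik}R_{jl}.$$
Multiplying by $f$ gives the last line of the claimed identity.

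\textbf{Step 3 (the Cotton term).} We integrate by parts pointwise:
$$fR_{jk}\nabla_iC_{ijk}=\nabla_i(fC_{ijk}R_{jk})-C_{ijk}R_{jk}\nabla_i f-fC_{ijk}\nabla_iR_{jk}.$$
Skew-symmetry of $C$ in its first two indices gives
$$C_{ijk}\nabla_iR_{jk}=\tfrac12 C_{ijk}(\nabla_iR_{jk}-\nabla_jR_{ik})=\tfrac12|C|^2,$$
which yields $-\tfrac f2|C_{ijk}|^2$. Finally,
$$-C_{ijk}R_{jk}\nabla_i f=C_{jik}R_{jk}\nabla_i f=C_{ijk}R_{ik}\nabla_j f$$
after swapping $i\leftrightarrow j$. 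Collecting all terms gives the lemma.

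The only delicate point is the sign bookkeeping in Step 2. One must match the curvature sign convention implicit in \eqref{CRicci} and \eqref{weyl}, so that the two commutator terms combine exactly into $R_{ij}R_{ik}R_{jk}-R_{ijkl}R_{ik}R_{jl}$. I would verify this on the round sphere, where both sides are easy to compute, before invoking Lemma~\ref{riemannAUX}(ii).
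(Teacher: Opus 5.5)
Your proof is correct. Each step reproduces the stated terms exactly:
\begin{itemize}
\item the expansion $\tfrac12\operatorname{div}(f\nabla|Ric|^2)=\tfrac f2\Delta|Ric|^2+\tfrac12\langle\nabla f,\nabla|Ric|^2\rangle$ with $\tfrac12\Delta|Ric|^2=|\nabla Ric|^2+R_{jk}\Delta R_{jk}$;
\item the splitting $\Delta R_{jk}=\nabla_iC_{ijk}+\nabla_i\nabla_jR_{ik}$, valid because $C_{ijk}=\nabla_iR_{jk}-\nabla_jR_{ik}$ when $R$ is constant;
\item the identity $C_{ijk}\nabla_iR_{jk}=\tfrac12|C|^2$, from skew-symmetry of $C$ in its first two indices;
\item the relabeling $-C_{ijk}R_{jk}\nabla_if=C_{ijk}R_{ik}\nabla_jf$.
\end{itemize}

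The paper gives no proof of this lemma; it quotes the identity from \cite{balt18}. Your direct Bochner-type computation therefore makes the argument self-contained. It is the same commutation step the paper itself performs when deriving \eqref{AuxD2}, where $\nabla_i\nabla_jR_{ki}R_{jk}$ is converted into $R_{ij}R_{ik}R_{jk}-R_{ijkl}R_{ik}R_{jl}$.

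You do not need the round-sphere check for the sign bookkeeping; the paper's conventions already settle it. Contracting \eqref{weyl} over the first and third indices gives $g^{ik}R_{ijkl}=R_{jl}$. This is the convention compatible with \eqref{Ricciiden} and \eqref{CRicci}, namely $R_{ijkl}=-\langle R(e_i,e_j)e_k,e_l\rangle$. Hence
$R_{ijis}R_{sk}R_{jk}=R_{ij}R_{ik}R_{jk}$ and
$R_{ijks}R_{is}R_{jk}=-R_{ijsk}R_{is}R_{jk}=-R_{ijkl}R_{ik}R_{jl}$.
This is exactly the combination to which Lemma~\ref{riemannAUX}(ii) applies. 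That identity also checks directly from \eqref{weyl}: the $R^3$ terms cancel, and the $R|\mathring{Ric}|^2$ coefficient is $\tfrac{3}{n-2}-\tfrac{2n-1}{(n-1)(n-2)}=\tfrac1{n-1}$.
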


Proceeding, we observe that the fundamental equation of an Einstein-type manifold can be rewritten in tensorial form as follows:
\begin{equation}\label{eq:tensorial}
\delta fR_{ij}+\nabla_{i}\nabla_{j}f=hg_{ij}.
\end{equation}
Tracing (\ref{eq:tensorial}) yields
\begin{equation}\label{eqtrace}
\delta fR+\Delta f=nh,
\end{equation}
where $\Delta$ is the Laplacian of $f\in C^{\infty}(M)$.

The first result that we would like to mention was proved by X. Chen in \cite[Lemma 2.3]{Chen23} considering a property restricted to the boundary. Here we assume that the manifold has constant scalar curvature and we obtain the same result, now valid for any point on the manifold, provided $\delta\neq-1$.

\begin{lemma}\label{Eric}
	Let $(M^{n},g,f,\delta,\gamma)$ be a compact gradient Einstein-type manifold with constant scalar curvature and $\delta\neq-1$. Then the gradient of $f$ is an eigenvector of the Ricci tensor. More precisely, we have
	$$Ric(\nabla f)=\frac{\delta R-(n-1)\theta}{1+\delta}\nabla f.$$ 
\end{lemma}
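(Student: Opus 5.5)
The plan is to take the divergence of the fundamental equation \eqref{eq:tensorial} and use the contracted second Bianchi identity, $\nabla_i R_{ij}=\frac12\nabla_j R$, together with the Ricci identity for third derivatives of $f$. This produces a first-order relation between $\operatorname{Ric}(\nabla f)$, $\nabla f$ and $\nabla h$. Constancy of $R$ then removes the $\nabla R$ term, and the fact that $h=\theta f+\gamma$ turns $\nabla h$ into a multiple of $\nabla f$.

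\textbf{Divergence of the fundamental equation.} Apply $\nabla_i$ to $\delta fR_{ij}+\nabla_i\nabla_j f=hg_{ij}$. This gives
\begin{equation*}
\delta R_{ij}\nabla_i f+\delta f\nabla_iR_{ij}+\nabla_i\nabla_i\nabla_j f=\nabla_j h .
\end{equation*}
Since $R$ is constant, $\nabla_iR_{ij}=\frac12\nabla_jR=0$. Commuting derivatives gives
\begin{equation*}
\nabla_i\nabla_i\nabla_j f=\nabla_j\Delta f+R_{jk}\nabla_k f .
\end{equation*}

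\textbf{Eliminating $\Delta f$.} From the traced equation \eqref{eqtrace}, $\Delta f=nh-\delta fR$. With $R$ constant,
\begin{equation*}
\nabla_j\Delta f=n\nabla_j h-\delta R\nabla_j f .
\end{equation*}
Substituting, we obtain
\begin{equation*}
(1+\delta)R_{jk}\nabla_k f+(n-1)\nabla_j h-\delta R\nabla_j f=0 .
\end{equation*}

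\textbf{Conclusion.} Since $h=\theta f+\gamma$, we have $\nabla h=\theta\nabla f$. Hence
\begin{equation*}
(1+\delta)\operatorname{Ric}(\nabla f)=\big(\delta R-(n-1)\theta\big)\nabla f .
\end{equation*}
Dividing by $1+\delta\neq0$ gives the claimed identity, valid at every point of $M$.

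The only delicate point is keeping the sign convention of the commutation formula for third derivatives of $f$ consistent with the paper's curvature convention, namely $\nabla_i\nabla_i\nabla_jf-\nabla_j\Delta f=R_{jk}\nabla_kf$. Once that is fixed, the remaining steps are routine.
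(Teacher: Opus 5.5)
Your proof is correct and follows the paper's argument. You take the divergence of the fundamental equation, use the contracted Bianchi identity and the commutation $\nabla_i\nabla_i\nabla_j f=\nabla_j\Delta f+R_{jk}\nabla_k f$, eliminate $\Delta f$ through the traced equation, and use $\nabla h=\theta\nabla f$. The only difference is that the paper keeps the $\nabla R$ term until the end. It first obtains the general identity $(1+\delta)R_{ij}\nabla_i f=\frac{\delta f}{2}\nabla_j R+(\delta R-(n-1)\theta)\nabla_j f$, which it reuses later, and only then imposes constant scalar curvature, whereas you drop that term immediately.
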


\begin{proof}
	First of all, differentiating (\ref{eq:tensorial}) and using the twice-contracted second Bianchi identity, we obtain
	$$\delta R_{ij}\nabla_{i}f+\frac{\delta f}{2}\nabla_{j}R+\nabla_{i}\nabla_{j}\nabla_{i}f=\theta\nabla_{j}f.$$
	This expression can be rewritten using the Ricci identity,
	\begin{equation}\label{Ricciiden}
		\nabla_{i}\nabla_{j}\nabla_{k}f-\nabla_{j}\nabla_{i}\nabla_{k}f=R_{ijkl}\nabla_{l}f,
	\end{equation}
	as
	\begin{equation}\label{AuxERic}
		(1+\delta)R_{ij}\nabla_{i}f=\frac{\delta f}{2}\nabla_{j}R+(\delta R-(n-1)\theta)\nabla_{j}f,
	\end{equation}
	where we have used (\ref{eqtrace}).
	
	Finally, using that the scalar curvature is constant, we obtain the desired result.	
\end{proof}

We observe that by differentiating both sides of (\ref{AuxERic}), we can deduce that 
\begin{eqnarray*}
	-(1+\delta)R_{ij}\nabla_{i}\nabla_{j}f =(-\delta R+(n-1)\theta)\Delta f+\frac{1-\delta}{2}\langle\nabla R,\nabla f\rangle-\frac{\delta}{2}\operatorname{div}(f\nabla R),
\end{eqnarray*}
which, combined with (\ref{eq:tensorial}), yields the following identity:
\begin{equation}\label{ricST}
	(1+\delta)\delta f|\mathring{\operatorname{Ric}}|^{2}=[R+(n-1)(n\theta-\delta R)]\frac{\Delta f}{n}+\frac{1-\delta}{2}\langle \nabla R,\nabla f\rangle-\frac{\delta}{2}\operatorname{div}(f\nabla R).
\end{equation}
This equality will be essential in our key lemmas.

The next result provides an expression for the Cotton tensor and can be found, for instance, in \cite{Chen24}.

\begin{lemma}\label{DeltaC}
Let $(M^{n},g,f,\delta,\gamma)$ be an Einstein-type manifold. Then we have:
\begin{eqnarray*}
	-\delta fC_{ijk}&=&R_{ijkl}\nabla_{l}f-(\nabla_{i}h\,g_{jk}-\nabla_{j}h\,g_{ik})+\delta(R_{jk}\nabla_{i} f-R_{ik}\nabla_{j}f)\\
	&&+\frac{\delta f}{2(n-1)}(\nabla_{i}R\,g_{jk}-\nabla_{j}R\,g_{ik}).
\end{eqnarray*}
In particular, if the scalar curvature is constant, then
\begin{eqnarray*}
	-\delta fC_{ijk}=R_{ijkl}\nabla_{l}f-(\nabla_{i}h\,g_{jk}-\nabla_{j}h\,g_{ik})+\delta(R_{jk}\nabla_{i} f-R_{ik}\nabla_{j}f).
\end{eqnarray*}
\end{lemma}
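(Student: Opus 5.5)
The statement to prove is Lemma~\ref{DeltaC}. I will obtain it by differentiating the tensorial fundamental equation \eqref{eq:tensorial} once and then antisymmetrizing in the first two indices. Two facts do all the work: the Ricci identity \eqref{Ricciiden} for the third derivatives of $f$, and the definition \eqref{cotton} of the Cotton tensor.

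First I take $\nabla_i$ of $\delta f R_{jk}+\nabla_j\nabla_k f=h g_{jk}$, which gives
\[
\delta\nabla_i f\,R_{jk}+\delta f\nabla_i R_{jk}+\nabla_i\nabla_j\nabla_k f=\nabla_i h\,g_{jk}.
\]
Next I subtract the same identity with $i$ and $j$ interchanged. The third-order terms combine to $\nabla_i\nabla_j\nabla_k f-\nabla_j\nabla_i\nabla_k f=R_{ijkl}\nabla_l f$ by \eqref{Ricciiden}. The result is
\[
\delta f(\nabla_iR_{jk}-\nabla_jR_{ik})=-R_{ijkl}\nabla_lf+(\nabla_ih\,g_{jk}-\nabla_jh\,g_{ik})-\delta(R_{jk}\nabla_if-R_{ik}\nabla_jf).
\]

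Then I replace $\nabla_iR_{jk}-\nabla_jR_{ik}$ using \eqref{cotton}:
\[
\nabla_iR_{jk}-\nabla_jR_{ik}=C_{ijk}+\frac{1}{2(n-1)}(\nabla_iR\,g_{jk}-\nabla_jR\,g_{ik}).
\]
Moving $\delta f C_{ijk}$ to one side and multiplying by $-1$ yields exactly the stated formula. The scalar-curvature term appears as $+\frac{\delta f}{2(n-1)}(\nabla_iR\,g_{jk}-\nabla_jR\,g_{ik})$.

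When $R$ is constant this last term vanishes, which gives the second formula. There is no genuine obstacle here. The only point needing care is sign bookkeeping: one must use the curvature sign convention fixed by \eqref{Ricciiden}, with $R_{ijkl}\nabla_lf$ on the right-hand side, and keep $\nabla_i h$ in general form, even though $\nabla_i h=\theta\nabla_i f$.
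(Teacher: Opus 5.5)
Your proposal is correct and takes essentially the same route as the paper. You differentiate the tensorial fundamental equation (the paper's \eqref{deltaf}), antisymmetrize in $i,j$ using the Ricci identity \eqref{Ricciiden}, and substitute the definition \eqref{cotton} of the Cotton tensor; your signs all check out, and you simply make explicit the antisymmetrization step that the paper leaves implicit.
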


\begin{proof}
Computing $\nabla_i(\delta fR_{jk})$ in \eqref{eq:tensorial}, we obtain
\begin{equation}\label{deltaf}
\delta f\nabla_iR_{jk}=\nabla_i h\,g_{jk}-\nabla_i\nabla_j\nabla_kf-\delta\nabla_i fR_{jk}.
\end{equation}
Next, combining \eqref{cotton} and \eqref{deltaf} with the Ricci identity \eqref{Ricciiden}, we conclude the proof.
\end{proof}

Now, we recall the covariant $3$-tensor $T_{ijk}$ which appears naturally in the study of Einstein-type manifolds (see Section 2 in \cite{Chen24}). Its definition is as follows:\begin{eqnarray}\label{TensorT}
T_{ijk}&=&\frac{1}{n-2}(R_{jl}\nabla_{l}fg_{ik}-R_{il}\nabla_{l}fg_{jk})\nonumber\\
&&-\left[\frac{R}{(n-1)(n-2)}-\theta\right](\nabla_{j}fg_{ik}-\nabla_{i}fg_{jk})\nonumber\\
&&+\left(\frac{1}{n-2}-\delta\right)(R_{ik}\nabla_{j}f-R_{jk}\nabla_{i}f)\nonumber\\
&&+\frac{\delta f}{2(n-1)}(g_{jk}\nabla_{i}R-g_{ik}\nabla_{j}R).
\end{eqnarray}
It is important to highlight that $T_{ijk}$ was defined similarly in the of Miao–Tam critical metric (see \cite{BDR15}).

Note that from a straightforward computation, we observe that the tensor
$T$ has the same symmetries as the Cotton tensor $C$, i.e.,
$$T_{ijk} =-T_{jik} \text{ and }T_{ijk} + T_{jki} + T_{kij} = 0.$$
Moreover, in \cite{Chen23} Lemma 2.4 was proved that the tensor $T$ is related with Cotton and Weyl tensors by identity
\begin{equation}\label{CWT}
-\delta fC_{ijk}=W_{ijkl}\nabla_{l}f+T_{ijk}.
\end{equation}

\section{Key lemmas}
In this section we shall present a couple of lemmas that will be useful in the proof of our main results. The first part of these results will be presented in a similar way to the article \cite{MBQ}.

\begin{lemma}\label{keyDIV2}
Let $(M^{n},g,f,\delta,\gamma)$ be an Einstein-type manifold with constant scalar curvature. Then we have
\begin{eqnarray*}	
	\frac{\delta}{2}\operatorname{div}(f\nabla|\operatorname{Ric}|^{2})&=&2(1+\delta)C_{ijk}R_{ik}\nabla_{j}f-(1+\delta)\nabla_{i}(R_{ik}R_{jk}\nabla_{j}f)-\delta f|C_{ijk}|^{2}\\
	&&+\delta f|\nabla \operatorname{Ric}|^{2}+2\delta\nabla_{i}(fC_{ijk}R_{jk})+\frac{1+3\delta}{2}\langle\nabla f, \nabla|\operatorname{Ric}|^{2}\rangle\\
	&&-\frac{n-1}{n}R\Delta h+\mathring{R}_{ij}\nabla_{i}\nabla_{j}h+\delta\Delta f|\mathring{\operatorname{Ric}}|^{2}+\delta\frac{R^{2}}{n}\Delta f.
\end{eqnarray*}
\end{lemma}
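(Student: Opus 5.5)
Multiply Lemma~\ref{keyDIV1} by $\delta$. The left-hand side is then $\frac{\delta}{2}\operatorname{div}(f\nabla|\operatorname{Ric}|^2)$, and what remains is to rewrite the cubic curvature terms
\[
\delta f\Big(\tfrac{n}{n-2}\operatorname{tr}(\mathring{Ric}^3)+\tfrac{R}{n-1}|\mathring{Ric}|^2-W_{ijkl}R_{ik}R_{jl}\Big).
\]
By Lemma~\ref{riemannAUX}(ii) this bracket equals $\delta f\,(R_{ij}R_{ik}R_{jk}-R_{ijkl}R_{ik}R_{jl})$. This is the quantity produced when a divergence of the type $\nabla_i(R_{ik}R_{jk}\nabla_j f)$ is expanded and the fundamental equation is used to remove the Hessian of $f$. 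The plan is to produce these cubic terms from divergences of the Einstein-type structure and then match coefficients.

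First, compute $\nabla_i(R_{ik}R_{jk}\nabla_jf)$. Since $R$ is constant, $\nabla_iR_{ik}=0$, so
\[
\nabla_i(R_{ik}R_{jk}\nabla_jf)=R_{ik}\nabla_iR_{jk}\nabla_jf+R_{ik}R_{jk}\nabla_i\nabla_jf .
\]
Substituting $\nabla_i\nabla_jf=hg_{ij}-\delta fR_{ij}$ from \eqref{eq:tensorial}, the last term becomes $h|\operatorname{Ric}|^2-\delta fR_{ij}R_{ik}R_{jk}$. For the first term, write $\nabla_iR_{jk}=\nabla_jR_{ik}+C_{ijk}$, which holds because $R$ is constant. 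This gives $\frac12\langle\nabla f,\nabla|\operatorname{Ric}|^2\rangle+C_{ijk}R_{ik}\nabla_jf$.

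Second, obtain the Riemann term $R_{ijkl}R_{ik}R_{jl}$ from Lemma~\ref{DeltaC}. Contract its constant-scalar-curvature version with $R_{ik}\nabla_j f$, or equivalently use the commutation of third derivatives of $f$ together with \eqref{deltaf}. This expresses $\delta f R_{ijkl}R_{ik}R_{jl}$ through three kinds of terms:
\begin{itemize}
\item $C_{ijk}R_{ik}\nabla_jf$;
\item $R_{ik}R_{jk}\nabla_i\nabla_jf$;
\item terms with $\nabla h=\theta\nabla f$, which become $\mathring R_{ij}\nabla_i\nabla_jh$ and $R\Delta h$ after integrating against $\operatorname{Ric}$.
\end{itemize}
A cleaner route is to start from $\delta f\nabla_iR_{jk}=\nabla_ih\,g_{jk}-\nabla_i\nabla_j\nabla_kf-\delta R_{jk}\nabla_if$ and contract with $\nabla_i R_{jk}$ or $R_{jk}$ as needed. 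The terms $h|\operatorname{Ric}|^2$ are rewritten using $\Delta f=nh-\delta fR$ together with \eqref{ricST}. This is where $\delta\Delta f|\mathring{\operatorname{Ric}}|^2+\delta\frac{R^2}{n}\Delta f$ and $-\frac{n-1}{n}R\Delta h$ are expected to arise.

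Finally, collect terms. The coefficient $1+\delta$ should come out as the sum of the contribution $1$ from the Hessian substitution and $\delta$ from the $\delta f R_{ik}R_{jk}R_{ij}$ piece. This accounts for the prefactors $2(1+\delta)C_{ijk}R_{ik}\nabla_jf$ and $-(1+\delta)\nabla_i(R_{ik}R_{jk}\nabla_jf)$. It also accounts for the change of the gradient coefficient from $\frac{\delta}{2}$ to $\frac{1+3\delta}{2}$: the extra $\frac{1+\delta}{2}\cdot 2$ comes from the first term of the divergence expansion.

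The main obstacle is this bookkeeping. The Riemann–Ricci–Ricci term has to cancel exactly against the cubic term of Lemma~\ref{riemannAUX}(ii). The leftover $h$-terms must then reorganize into the stated $\Delta h$, $\nabla^2h$ and $\Delta f$ combination, without any residual $\operatorname{tr}(\mathring{Ric}^3)$ or Weyl terms. I would verify the cancellation first in the Einstein case as a sanity check, and then do the general coefficient matching.
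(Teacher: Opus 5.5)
\textbf{There is a genuine gap: the Riemann half of the cubic term is never produced.} Your framing is equivalent to the paper's. The paper takes $X_i=R_{ik}R_{kj}\nabla_jf+R_{ijkl}\nabla_lfR_{jk}$, computes $\operatorname{div}X$ in two ways, and uses Lemma~\ref{keyDIV1} to remove the cubic term. That amounts to substituting into $\delta$ times Lemma~\ref{keyDIV1}, as you propose, and your first step is exactly the expansion of the first half of $X$.

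The problem is your second step. Contracting Lemma~\ref{DeltaC} with $R_{ik}\nabla_jf$, or using the Ricci identity for $\nabla^3f$ together with \eqref{deltaf}, only ever yields $R_{ijkl}\nabla_lf$ with an undifferentiated $\nabla_lf$. So no term $fR_{ijkl}R_{ik}R_{jl}$ can appear. That term arises only when a derivative falls on $\nabla_lf$ and the Hessian is replaced by $hg-\delta f\operatorname{Ric}$.

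The missing idea is to take the divergence of $Y_i=R_{ijkl}\nabla_lfR_{jk}$ and evaluate it twice. First, expand directly, using $\nabla_iR_{ijkl}=\nabla_kR_{jl}-\nabla_lR_{jk}$, the fundamental equation, and Lemma~\ref{DeltaC} for the factor $R_{ijkl}\nabla_lf$ in $R_{ijkl}\nabla_lf\,\nabla_iR_{jk}$. This gives
\[
\operatorname{div}Y=(1+\delta)C_{ijk}R_{ik}\nabla_jf-h|\operatorname{Ric}|^{2}+\delta fR_{ijkl}R_{ik}R_{jl}-\tfrac{\delta f}{2}|C|^{2}.
\]
Second, substitute Lemma~\ref{DeltaC} into $Y$ before differentiating. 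This gives
\[
\operatorname{div}Y=-\delta\nabla_i(fC_{ijk}R_{jk})+\tfrac{n-1}{n}R\Delta h-\mathring{R}_{ij}\nabla_i\nabla_jh-\delta\langle\nabla f,\nabla|\operatorname{Ric}|^{2}\rangle-\delta|\operatorname{Ric}|^{2}\Delta f+\delta\nabla_i(R_{ik}R_{jk}\nabla_jf).
\]
Equating the two expresses $\delta fR_{ijkl}R_{ik}R_{jl}$. Combined with your first-step formula for $\delta fR_{ij}R_{ik}R_{jk}$ and inserted into $\delta$ times Lemma~\ref{keyDIV1}, this gives the statement exactly.

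These formulas also show where your bookkeeping fails. Your list of ingredients omits the $-\frac{\delta f}{2}|C|^2$ and $\delta\nabla_i(fC_{ijk}R_{jk})$ contributions. Without them, the $-\frac{\delta}{2}f|C|^2$ and $\delta\nabla_i(fC_{ijk}R_{jk})$ from Lemma~\ref{keyDIV1} cannot become the stated $-\delta f|C|^2$ and $2\delta\nabla_i(fC_{ijk}R_{jk})$.

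Equation \eqref{ricST} plays no role here. The $+h|\operatorname{Ric}|^2$ from your first step cancels identically against the $-h|\operatorname{Ric}|^2$ above. The $\Delta h$, $\nabla^2 h$ and $\Delta f$ terms come from differentiating $R\nabla h-\operatorname{Ric}(\nabla h)-\delta|\operatorname{Ric}|^2\nabla f$, pointwise, with no integration involved.

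Your coefficient count is also off. The gradient coefficient must grow from $\frac{\delta}{2}$ by $\frac12+\delta$: the $\frac12$ comes from your first expansion and the $\delta$ from the second formula for $\operatorname{div}Y$. Growing by $1+\delta$, as you suggest, would give $\frac{2+3\delta}{2}$. Likewise $2(1+\delta)=\delta+1+(1+\delta)$ collects the contributions of Lemma~\ref{keyDIV1}, your first step, and the direct expansion of $\operatorname{div}Y$.
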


\begin{proof}
Since the scalar curvature is constant, we may consider the field $X_{i}=R_{ik}R_{kj}\nabla_{j}f+R_{ijkl}\nabla_{l}fR_{jk}$ and use Lemma~\ref{DeltaC} to deduce
\begin{eqnarray}\label{FdivXaux}
	\operatorname{div}X &=& \nabla_{i}(R_{ik}R_{kj}\nabla_{j}f) \nonumber\\
	&&+\nabla_{i}\big[-\delta fC_{ijk}R_{jk}+R\nabla_{i} h-R_{ij}\nabla_{j}h -\delta|\operatorname{Ric}|^{2}\nabla_{i}f+\delta R_{ik}R_{kj}\nabla_{j}f\big] \nonumber\\
	&=&(1+\delta)\nabla_{i}(R_{ik}R_{kj}\nabla_{j}f)-\delta\nabla_{i}(fC_{ijk}R_{jk})+\frac{n-1}{n}R\Delta h -\mathring{R}_{ij}\nabla_{i}\nabla_{j}h \nonumber\\
	&&-\delta\langle \nabla f, \nabla|\operatorname{Ric}|^{2}\rangle-\delta|\mathring{\operatorname{Ric}}|^{2} \Delta f-\delta\frac{R^{2}}{n}\Delta f.
\end{eqnarray}

On the other hand, by direct computation using (\ref{eq:tensorial}), the fact that the scalar curvature is constant, and Lemma~\ref{DeltaC} again, we obtain
\begin{eqnarray}\label{divXaux}
	\operatorname{div}X &=& R_{ik}\nabla_{i}R_{jk}\nabla_{j}f+R_{ik}R_{kj}\nabla_{j}\nabla_{k} f \nonumber\\
	&&+\nabla_{i}R_{ijkl}\nabla_{l}fR_{jk}+R_{ijkl}\nabla_{i}\nabla_{l}fR_{jk}+R_{ijkl}\nabla_{l}f \nabla_{i}R_{jk} \nonumber\\
	&=&(1+\delta)R_{ik}\nabla_{i}R_{jk}\nabla_{j}f+C_{ijk}R_{ik}\nabla_{j}f-\frac{\delta f}{2}|C_{ijk}|^{2} \nonumber\\
	&&-\delta f(R_{ik}R_{ij}R_{jk}-R_{ijkl}R_{ik}R_{jl})-\frac{\delta}{2}\langle\nabla f,\nabla|\operatorname{Ric}|^{2}\rangle.
\end{eqnarray}

To proceed, notice that Lemma~\ref{keyDIV1} can be rewritten as 
\begin{eqnarray*}
	-\delta f(R_{ik}R_{ij}R_{jk}-R_{ijkl}R_{ik}R_{jl}) &=& -\frac{\delta f}{2}|C_{ijk}|^{2}+\delta f|\nabla \operatorname{Ric}|^{2}-\delta\nabla_{j}(fC_{ijk}R_{ik})\\
	&&-\frac{\delta}{2}\operatorname{div}(f\nabla|\operatorname{Ric}|^{2})+\delta R_{ik}C_{ijk}\nabla_{j}f\\
	&&+\frac{\delta}{2}\langle\nabla f,\nabla|\operatorname{Ric}|^{2}\rangle.
\end{eqnarray*}

Substituting this into (\ref{divXaux}) yields
\begin{eqnarray}\label{SdivXaux}
	\operatorname{div}X &=& 2(1+\delta)C_{ijk}R_{ik}\nabla_{j}f+\frac{1+\delta}{2}\langle\nabla f,\nabla|\operatorname{Ric}|^{2}\rangle-\delta f|C_{ijk}|^{2}+\delta f|\nabla \operatorname{Ric}|^{2} \nonumber\\
	&&+\delta \nabla_{i}(fC_{ijk}R_{jk})-\frac{\delta}{2}\operatorname{div}(f\nabla|\operatorname{Ric}|^{2}).
\end{eqnarray}

Finally, we simply compare expressions (\ref{FdivXaux}) and (\ref{SdivXaux}).
\end{proof}

The next lemma is a combination of Lemmas \ref{keyDIV1} and \ref{keyDIV2}. In fact, multiplying the result of Lemma \ref{keyDIV1} by $2\delta$ and subtracting it from Lemma \ref{keyDIV2}, we obtain

\begin{lemma}\label{keyDIV3}
	Let $(M^{n},g,f,\delta,\gamma)$ be an Einstein-type manifold with constant scalar curvature. Then we have
	\begin{eqnarray*}	
		\frac{\delta}{2}\operatorname{div}(f\nabla|\operatorname{Ric}|^{2})&=&\delta f|\nabla \operatorname{Ric}|^{2}-2C_{ijk}R_{ik}\nabla_{j}f -\frac{1+\delta}{2}\langle \nabla f, \nabla |\operatorname{Ric}|^{2}\rangle\\
		&&+2\delta f\left(\frac{n}{n-2}\operatorname{tr}(\mathring{\operatorname{Ric}}^{3})+\frac{R}{n-1}|\mathring{\operatorname{Ric}}|^{2}-W_{ijkl}R_{ik}R_{jl}\right)\\
		&&+\frac{n-1}{n}R\Delta h-\mathring{R}_{ij}\nabla_{i}\nabla_{j}h-\delta\Delta f |\mathring{\operatorname{Ric}}|^{2}-\delta\Delta f\frac{R^{2}}{n}\\
		&&+(1+\delta)\nabla_{i}(R_{ik}R_{jk}\nabla_{j}f).
	\end{eqnarray*}
\end{lemma}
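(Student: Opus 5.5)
The plan is to obtain the identity as a direct linear combination of Lemma~\ref{keyDIV1} and Lemma~\ref{keyDIV2}. Both hold on an Einstein-type manifold with constant scalar curvature, and they share most of their terms. Lemma~\ref{keyDIV1} applies because it is valid on any Riemannian manifold with constant scalar curvature and any smooth function $f$. I will multiply the identity of Lemma~\ref{keyDIV1} by $2\delta$, subtract it from the identity of Lemma~\ref{keyDIV2}, and finally multiply the result by $-1$.

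Next I track the cancellations term by term. On the left, $\frac{\delta}{2}-\delta=-\frac{\delta}{2}$, so the left side becomes $-\frac{\delta}{2}\operatorname{div}(f\nabla|\operatorname{Ric}|^{2})$. The terms $-\delta f|C_{ijk}|^{2}$ and $2\delta\nabla_{i}(fC_{ijk}R_{jk})$ cancel exactly. The coefficients of the remaining shared terms combine as follows:
\begin{itemize}
\item for $C_{ijk}R_{ik}\nabla_{j}f$: $2(1+\delta)-2\delta=2$;
\item for $f|\nabla\operatorname{Ric}|^{2}$: $\delta-2\delta=-\delta$;
\item for $\langle\nabla f,\nabla|\operatorname{Ric}|^{2}\rangle$: $\frac{1+3\delta}{2}-\delta=\frac{1+\delta}{2}$.
\end{itemize}
The cubic curvature block $\frac{n}{n-2}\operatorname{tr}(\mathring{\operatorname{Ric}}^{3})+\frac{R}{n-1}|\mathring{\operatorname{Ric}}|^{2}-W_{ijkl}R_{ik}R_{jl}$ appears only in Lemma~\ref{keyDIV1}, so it enters with coefficient $-2\delta f$. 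The terms involving $h$, the two $\delta\Delta f$ terms and $-(1+\delta)\nabla_{i}(R_{ik}R_{jk}\nabla_{j}f)$ appear only in Lemma~\ref{keyDIV2} and pass through unchanged.

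Multiplying everything by $-1$ then gives exactly the stated formula. In it the signs of the $h$-terms, the $\Delta f$-terms and the divergence $(1+\delta)\nabla_{i}(R_{ik}R_{jk}\nabla_{j}f)$ are all flipped relative to Lemma~\ref{keyDIV2}, and the Weyl/trace block appears with $+2\delta f$.

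The only real obstacle is bookkeeping of index conventions, since there is no analytic content. I need to check that the term $C_{ijk}R_{ik}\nabla_{j}f$ and the divergence $\nabla_{i}(fC_{ijk}R_{jk})$ carry the same index placement in both lemmas; otherwise the skew-symmetry $C_{ijk}=-C_{jik}$ would introduce a sign. Both lemmas are written with the same placement, and $|\nabla\operatorname{Ric}|^{2}$ and $|C_{ijk}|^{2}$ are identical norms in each, so the cancellations above are legitimate.
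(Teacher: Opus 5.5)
Your proposal is correct and is the paper's own argument: subtract $2\delta$ times Lemma~\ref{keyDIV1} from Lemma~\ref{keyDIV2}. Your coefficient bookkeeping checks out term by term. The final multiplication by $-1$ is needed because the left side comes out as $-\frac{\delta}{2}\operatorname{div}(f\nabla|\operatorname{Ric}|^{2})$, and the paper leaves that step implicit while you spell it out.
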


To proceed, we will deduce an identity for the last two lines of the expression obtained in Lemma \ref{keyDIV3}. More precisely, we have the following result.

\begin{lemma}\label{keyID}
	Let $(M^{n},g,f,\delta,\gamma)$ be an Einstein-type manifold with constant scalar curvature and $\delta\neq-1$. Then
	$$\frac{n-1}{n}R\Delta h-\mathring{R}_{ij}\nabla_{i}\nabla_{j}h-\delta\Delta f |\mathring{\operatorname{Ric}}|^{2}-\delta\Delta f\frac{R^{2}}{n}+(1+\delta)\nabla_{i}(R_{ik}R_{jk}\nabla_{j}f)=-n\gamma\delta|\mathring{\operatorname{Ric}}|^{2}.$$
\end{lemma}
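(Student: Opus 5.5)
The plan is to use the special form $h=\theta f+\gamma$ and Lemma~\ref{Eric} to turn every term on the left-hand side into a multiple of either $\Delta f$ or $f|\mathring{\operatorname{Ric}}|^{2}$. The identity (\ref{ricST}) then converts the latter into the former, and we check that all leftover coefficients cancel except $-n\gamma\delta|\mathring{\operatorname{Ric}}|^2$. Throughout, $R$ is constant and $\delta\neq-1$. We write
$$\lambda=\frac{\delta R-(n-1)\theta}{1+\delta},$$
so that $R_{jk}\nabla_j f=\lambda\nabla_k f$ by Lemma~\ref{Eric}.

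\textbf{Step 1: the $h$-terms.} Since $\nabla h=\theta\nabla f$, we have $\Delta h=\theta\Delta f$ and $\nabla_i\nabla_j h=\theta\nabla_i\nabla_j f$. By (\ref{eq:tensorial}), $\nabla^2 f=hg-\delta f\operatorname{Ric}$, and since $\mathring{\operatorname{Ric}}$ is trace-free,
$$-\mathring{R}_{ij}\nabla_i\nabla_j h=\theta\delta f|\mathring{\operatorname{Ric}}|^{2}.$$
The first term is simply $\frac{n-1}{n}R\theta\,\Delta f$.

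\textbf{Step 2: the divergence term.} Applying Lemma~\ref{Eric} twice gives $R_{ik}R_{jk}\nabla_j f=\lambda^{2}\nabla_i f$. Because $\lambda$ is constant, $(1+\delta)\nabla_i(R_{ik}R_{jk}\nabla_jf)=(1+\delta)\lambda^{2}\Delta f$.

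\textbf{Step 3: the $|\mathring{\operatorname{Ric}}|^2$ term.} Using the trace (\ref{eqtrace}) in the form $\Delta f=n\theta f+n\gamma-\delta fR$, we get
$$-\delta\Delta f|\mathring{\operatorname{Ric}}|^{2}=-n\gamma\delta|\mathring{\operatorname{Ric}}|^{2}-\delta f(n\theta-\delta R)|\mathring{\operatorname{Ric}}|^{2}.$$
This produces the desired right-hand side. Adding the contribution from Step 1, the remaining $f|\mathring{\operatorname{Ric}}|^2$ terms combine to
$$\delta f\big(\delta R-(n-1)\theta\big)|\mathring{\operatorname{Ric}}|^{2}=\lambda\,(1+\delta)\delta f|\mathring{\operatorname{Ric}}|^{2}.$$
By (\ref{ricST}) with $\nabla R=0$, this equals $\lambda\,\frac{R+(n-1)(n\theta-\delta R)}{n}\,\Delta f$.

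\textbf{Step 4: the main obstacle.} After Steps 1--3, everything except $-n\gamma\delta|\mathring{\operatorname{Ric}}|^2$ is a constant times $\Delta f$, with constant
$$\frac{n-1}{n}R\theta-\frac{\delta R^{2}}{n}+(1+\delta)\lambda^{2}+\lambda\,\frac{R+(n-1)(n\theta-\delta R)}{n}.$$
The crux is to show that this constant vanishes. It is a purely algebraic identity in $R,\theta,\delta,n$. However, the bracket as printed in (\ref{ricST}) may contain a typo, so I would not rely on it blindly. Instead, I would rederive the conversion directly: contract (\ref{eq:tensorial}) with $R_{ij}$ to get $R_{ij}\nabla_i\nabla_jf=hR-\delta f|\operatorname{Ric}|^2$. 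On the other hand, differentiating $R_{ij}\nabla_j f=\lambda\nabla_i f$ and using $\nabla_iR_{ij}=0$ gives $R_{ij}\nabla_i\nabla_jf=\lambda\Delta f$. Writing $|\operatorname{Ric}|^2=|\mathring{\operatorname{Ric}}|^2+R^2/n$ and $h=(\Delta f+\delta fR)/n$, this yields
$$\delta f|\mathring{\operatorname{Ric}}|^{2}=\Big(\frac{R}{n}-\lambda\Big)\Delta f.$$
Substituting this cleaner relation in place of (\ref{ricST}), the total $\Delta f$ coefficient becomes a polynomial in $\lambda$. Using $(1+\delta)\lambda=\delta R-(n-1)\theta$, I expect it to collapse to zero by direct expansion, which finishes the proof.
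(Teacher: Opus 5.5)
Your proof is correct and follows the paper's route: $h=\theta f+\gamma$, Lemma~\ref{Eric}, the trace identity, and the conversion identity (\ref{ricST}). The only difference is direction: you rewrite the $f|\mathring{\operatorname{Ric}}|^{2}$ terms as multiples of $\Delta f$, whereas the paper rewrites the $\Delta f$ terms as multiples of $f|\mathring{\operatorname{Ric}}|^{2}$. Your rederived relation $\delta f|\mathring{\operatorname{Ric}}|^{2}=\big(\frac{R}{n}-\lambda\big)\Delta f$ is equivalent to (\ref{ricST}) with $\nabla R=0$, so that bracket has no typo. The final check you left as "expected" is immediate: since $\frac{n-1}{n}R\theta-\frac{\delta R^{2}}{n}=-\frac{R}{n}(1+\delta)\lambda$, the constant is $-\frac{R}{n}(1+\delta)\lambda+(1+\delta)\lambda^{2}+(1+\delta)\lambda\big(\frac{R}{n}-\lambda\big)=0$.
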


\begin{proof}
Firstly, using Lemma~\ref{Eric} and the fact that $h=\theta f+\gamma$, we may deduce 
\begin{eqnarray}\label{keyE1}
	&&\frac{n-1}{n}R\Delta h-\delta\Delta f \frac{R^{2}}{n}+(1+\delta)\nabla_{i}(R_{ik}R_{jk}\nabla_{j}f)\nonumber\\	
	&&=\left(\frac{n-1}{n}\theta R-\frac{\delta R^{2}}{n}\right)\Delta f+\frac{((n-1)\theta-\delta R)^{2}}{1+\delta}\Delta f\nonumber\\
	&&=((n-1)\theta-\delta R)\left(\frac{R}{n}+\frac{(n-1)\theta-\delta R}{1+\delta}\right)\Delta f\nonumber\\
	&&=((n-1)\theta-\delta R)\big[R+(n-1)(n\theta-\delta R)\big]\frac{\Delta f}{n(1+\delta)}.
\end{eqnarray}

Since we are considering constant scalar curvature, we may use (\ref{ricST}) to rewrite the last expression as 
\begin{eqnarray}\label{S1}
	\hspace{1.0cm}\frac{n-1}{n}R\Delta h-\delta\Delta f \frac{R^{2}}{n}+(1+\delta)\nabla_{i}(R_{ik}R_{jk}\nabla_{j}f)
	=((n-1)\theta-\delta R)\delta f|\mathring{\operatorname{Ric}}|^{2}.
\end{eqnarray}

At the same time, we can use (\ref{eq:tensorial}) and (\ref{eqtrace}) to infer 
\begin{eqnarray}\label{keyE2}
	-\mathring{R}_{ij}\nabla_{i}\nabla_{j}h-\delta\Delta f |\mathring{\operatorname{Ric}}|^{2}
	&=&-\theta\mathring{R}_{ij}(-\delta fR_{ij}+hg_{ij})-\delta\Delta f |\mathring{\operatorname{Ric}}|^{2}\nonumber\\
	&=&(\theta f-(-\delta f R+hn))\delta |\mathring{\operatorname{Ric}}|^{2}\nonumber\\
	&=&-((n-1)\theta-\delta R)\delta f|\mathring{\operatorname{Ric}}|^{2}-n\gamma\delta |\mathring{\operatorname{Ric}}|^{2}.
\end{eqnarray}

Adding expressions (\ref{S1}) and (\ref{keyE2}) yields the desired result.
\end{proof}

Finally, combining Lemmas \ref{keyDIV3} and \ref{keyID}, and using the fact that $f=0$ on $\partial M$, we obtain the last result of this section. This result is a key integral formula for deducing Theorem~\ref{ThmIF}.
\begin{lemma}\label{LastLemma}
	Let $(M^{n},g,f,\delta,\gamma)$ be an Einstein-type manifold with constant scalar curvature and $\delta\neq-1$. Then we have
	\begin{eqnarray*}	
		0&=&\delta \int_{M}f|\nabla Ric|^{2}dM-2\int_{M}C_{ijk}R_{ik}\nabla_{j}fdM -\frac{1+\delta}{2}\int_{M}\langle \nabla f, \nabla |Ric|^{2}\rangle dM\\
		&&+2\delta \int_{M}f\left(\frac{n}{n-2}tr(\mathring{Ric}^{3})+\frac{R}{n-1}|\mathring{Ric}|^{2}-W_{ijkl}R_{ik}R_{jl}\right)dM\\
		&&-n\gamma\delta\int_{M}|\mathring{Ric}|^{2}dM.
	\end{eqnarray*}
\end{lemma}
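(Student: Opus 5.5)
The plan is to integrate the pointwise identity of Lemma~\ref{keyDIV3} over $M$ and then replace its last two lines using Lemma~\ref{keyID}. Both lemmas require constant scalar curvature, and Lemma~\ref{keyID} also requires $\delta\neq-1$, so both apply under the present hypotheses.

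\textbf{Step 1: pointwise substitution.} Lemma~\ref{keyDIV3} expresses $\frac{\delta}{2}\operatorname{div}(f\nabla|\operatorname{Ric}|^{2})$ as a sum of terms. Its last two lines are exactly the left-hand side of Lemma~\ref{keyID}, so we replace them by $-n\gamma\delta|\mathring{\operatorname{Ric}}|^{2}$. This gives the pointwise identity
\[
\frac{\delta}{2}\operatorname{div}(f\nabla|\operatorname{Ric}|^{2})=\delta f|\nabla\operatorname{Ric}|^{2}-2C_{ijk}R_{ik}\nabla_{j}f-\frac{1+\delta}{2}\langle\nabla f,\nabla|\operatorname{Ric}|^{2}\rangle+2\delta f\,Q-n\gamma\delta|\mathring{\operatorname{Ric}}|^{2},
\]
where $Q=\frac{n}{n-2}\operatorname{tr}(\mathring{\operatorname{Ric}}^{3})+\frac{R}{n-1}|\mathring{\operatorname{Ric}}|^{2}-W_{ijkl}R_{ik}R_{jl}$.

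\textbf{Step 2: integration.} We integrate this identity over the compact manifold $M$ and apply the divergence theorem to the left-hand side. This gives
\[
\int_M\operatorname{div}(f\nabla|\operatorname{Ric}|^{2})\,dM=\int_{\partial M}f\,\langle\nabla|\operatorname{Ric}|^{2},\nu\rangle\,dS=0,
\]
since $f^{-1}(0)=\partial M$ forces $f$ to vanish on $\partial M$. Moving everything to one side then yields exactly the stated formula.

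\textbf{The only point needing care: no divergence terms are left over.} It is not automatic that the right-hand side of Lemma~\ref{keyDIV3} contains no further divergences. The two divergence-type quantities that appear during the derivation are $\nabla_i(fC_{ijk}R_{jk})$ and $\nabla_i(R_{ik}R_{jk}\nabla_j f)$.

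\begin{itemize}
\item The term $\nabla_i(fC_{ijk}R_{jk})$ cancels when Lemma~\ref{keyDIV1} (multiplied by $2\delta$) is subtracted from Lemma~\ref{keyDIV2}: it carries coefficient $2\delta$ in both.
\item The term $\nabla_i(R_{ik}R_{jk}\nabla_j f)$ has been absorbed into Lemma~\ref{keyID}. That lemma holds pointwise, because it uses (\ref{ricST}) with $\nabla R=0$, which is a pointwise identity.
\end{itemize}

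Hence no divergence term remains whose boundary contribution would involve $\nabla f$ on $\partial M$, where $\nabla f$ need not vanish. Consequently the only boundary integral produced is the one in Step~2, which is killed by $f|_{\partial M}=0$.
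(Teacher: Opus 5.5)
Your proposal is correct and follows the paper's argument exactly: substitute Lemma~\ref{keyID} pointwise into the last two lines of Lemma~\ref{keyDIV3}, integrate over $M$, and kill $\int_M \operatorname{div}(f\nabla|\operatorname{Ric}|^{2})\,dM$ using the divergence theorem and $f|_{\partial M}=0$. Your extra check is a worthwhile clarification that the paper leaves implicit: the term $\nabla_i(R_{ik}R_{jk}\nabla_j f)$, whose boundary flux need not vanish, is absorbed pointwise by Lemma~\ref{keyID} rather than integrated.
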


\section{Integral formula for Eintein-Type manifolds}

In this section we will demonstrate the Theorem~\ref{ThmIF} announced in the introduction. 
We start with two  identities involving the tensor $D_{ijk}$. In fact, by definition (\ref{TensorD}) we obtain
\begin{eqnarray}\label{Dijk}
	D_{ijk}R_{jk}\nabla_{i}f&=&\frac{1}{2}\langle\nabla f,\nabla|Ric|^{2}\rangle+2\nabla_{j}R_{ki}R_{jk}\nabla_{i}f\\
	&=&\frac{1}{2}\langle\nabla f,\nabla|Ric|^{2}\rangle+ 2(C_{jik}+\nabla_{i}R_{jk})R_{jk}\nabla_{i}f\nonumber\\
	&=&\frac{3}{2}\langle\nabla f,\nabla|Ric|^{2}\rangle+ 2C_{ijk}R_{ik}\nabla_{j}f\nonumber,
	\end{eqnarray}
where we have used the definition of Cotton tensor. For our purposes, we have rewritten it  as follows
\begin{eqnarray}\label{AuxD1}
	\frac{1}{2}\langle\nabla f,\nabla|Ric|^{2}\rangle=\frac{1}{3}D_{ijk}R_{jk}\nabla_{i}f- \frac{2}{3}C_{ijk}R_{ik}\nabla_{j}f.
\end{eqnarray}

Now, notice that the first line in \eqref{Dijk} and using \eqref{CRicci}, we  can be written as 
\begin{eqnarray}\label{AuxD2}
	D_{ijk}R_{jk}\nabla_{i}f&=&\frac{1}{2}\langle\nabla f,\nabla|Ric|^{2}\rangle+2\nabla_{i}(\nabla_{j}R_{ki}R_{jk}f)\nonumber\\
	&&-2\nabla_{i}\nabla_{j}R_{ki}R_{jk}f-2\nabla_{j}R_{ki}\nabla_{i}R_{jk}f\nonumber\\
	&=&\frac{1}{2}\langle\nabla f,\nabla|Ric|^{2}\rangle+2\nabla_{i}(\nabla_{j}R_{ki}R_{jk}f)\nonumber\\
	&&-2f(R_{ij}R_{ik}R_{jk}-R_{ijkl}R_{ik}R_{jl})\nonumber\\
	&&-\frac{1}{3}f|D_{ijk}|^{2}+f|\nabla Ric|^{2}.
\end{eqnarray}
Here, we have used the commutation formula mentioned in (\ref{CRicci}). Furthermore, the last step was used the expression that appears in the computation of the norm of tensor $D_{ijk},$ i.e., 
$$\frac{1}{3}|D_{ijk}|^{2}=D_{ijk}\nabla_{i}R_{jk}=|\nabla Ric|^{2}+2\nabla_{i}R_{jk}\nabla_{j}R_{ik}.$$

Continuing, we multiply expression (\ref{AuxD2}) by $\delta$ and integrate both sides to obtain
\begin{eqnarray*}\label{AuxD3}
	\delta\int_{M}D_{ijk}R_{jk}\nabla_{i}fdM
	&=&\frac{\delta}{2}\int_{M}\langle\nabla f,\nabla|Ric|^{2}\rangle dM-\frac{\delta}{3}\int_{M}f|D_{ijk}|^{2}dM+\delta\int_{M}f|\nabla Ric|^{2}dM\nonumber\\
	&&-2\delta\int_{M} f\left(\frac{n}{n-2}tr(\mathring{Ric}^{3})+\frac{R}{n-1}|\mathring{Ric}|^{2}-W_{ijkl}R_{ik}R_{jl}\right)dM
\end{eqnarray*}
where we have used Lemma~\ref{riemannAUX} item (ii) and $f=0$ on $\partial M$.
This expression combined with Lemma~\ref{LastLemma} becomes
\begin{eqnarray*}\label{AuxD3}
	\delta\int_{M}D_{ijk}R_{jk}\nabla_{i}fdM
	&=&-\frac{1}{2}\int_{M}\langle\nabla f,\nabla|Ric|^{2}\rangle dM-\frac{\delta}{3}\int_{M}f|D_{ijk}|^{2}dM+2\delta\int_{M}f|\nabla Ric|^{2}dM\nonumber\\
	&&-n\gamma\delta\int_{M}|\mathring{Ric}|^{2}dM-2\int_{M}C_{ijk}R_{ik}\nabla_{j}fdM.
\end{eqnarray*}
Futhermore, substituting (\ref{AuxD1}) in the above identity, we arrive at 
\begin{eqnarray}\label{AuxnormaD}
	\frac{3\delta+1}{3}\int_{M}D_{ijk}R_{jk}\nabla_{i}fdM
	&=&-\frac{\delta}{3}\int_{M}f|D_{ijk}|^{2}dM+2\delta\int_{M}f|\nabla Ric|^{2}dM\nonumber\\
	&&-n\gamma\delta\int_{M}|\mathring{Ric}|^{2}dM-\frac{4}{3}\int_{M}C_{ijk}R_{ik}\nabla_{j}fdM.
\end{eqnarray}

On the other hand, since we are considering $\delta\neq\frac{1}{n-2},$ we may use (\ref{TensorT}) and (\ref{CWT}) to deduce
$$C_{ijk}R_{ik}\nabla_{j}f=\frac{1}{2(\frac{1}{n-2}-\delta)}(-\delta f|C_{ijk}|^{2}+\langle C,i_{\nabla f}W\rangle).$$
Moreover, taking into account that $R$ constant, we can use the definition of Cotton tensor to get 
$$|C_{ijk}|^{2}=-\frac{1}{3}|D_{ijk}|^{2}+3|\nabla Ric|^{2}.$$
Thus, it is immediate to obtain
$$C_{ijk}R_{ik}\nabla_{j}f=\frac{1}{2(\frac{1}{n-2}-\delta)}\left(\frac{\delta}{3} f|D_{ijk}|^{2}-3\delta f|\nabla Ric|^2+\langle C,i_{\nabla f}W\rangle\right).$$
Substituting this data into (\ref{AuxnormaD}) we obtain the desired integral formula.

\subsection{Proof of Theorem~\ref{Thm3D}}
\begin{proof}
To begin with, taking into account our hypothesis, we apply Theorem~\ref{ThmIF} to deduce  
$$0=\frac{2(\delta-2)}{\delta-1}\int_{M}f|\nabla Ric|^{2}dM-3\gamma\int_{M}|\mathring{Ric}|^{2}dM.$$
Now, analyzing this identity considering  items (i), (ii) and (iii), we can observe that item (i) forces $M^{3}$ to have parallel Ricci tensor and itens (ii) and (iii) force $M^{3}$ to be Einstein, as we want to prove.
\end{proof}

\subsection{Conclusion of the Proof of Theorem~\ref{ThmDelta-1}}
\begin{proof}
Since the potential function $f$ is nontrivial, we may use  (\ref{AuxERic}) to conclude that 
\begin{equation}\label{AuxdDltaRTheta}
	\delta R-(n-1)\theta=0.
	\end{equation}
Next, considering the fundamental equation (\ref{eq:tensorial}) with $\delta=-1$, it is immediate that 
$$-\Delta f g+Hessf -fRic=(h-\Delta f)g=-(n-1)\gamma g,$$
where we have used (\ref{AuxdDltaRTheta}). This equation describes the well-known $V$-static metrics, and the classification will follow the results obtained in \cite{BS21}, more precisely, for item (i) we apply \cite[Theorem 4]{BS21}, for item (ii) we apply \cite[Theorem 6]{BS21} and finally, for item (iii) we apply \cite[Theorem 7]{BS21}.
\end{proof}

\section*{Data availability statement}
This manuscript has no associated data.

\section*{Conflict of interest statement}
On behalf of all authors, the corresponding author states that there is no conflict of interest.

\section*{Acknowledgments}
M. Andrade was partially supported by the Brazilian National Council for Scientific and Technological Development (CNPq, grants 408834/2023-4, 403869/2024-2, and 400078/2025-2) and FAPITEC/SE/Brazil (grant 019203.01303/2024-1). She also extends her gratitude to the Department of Mathematics at Princeton University, where this work was completed during her visit as a Visiting Fellow, for its warm hospitality. She is especially thankful to Ana Menezes for her continuous encouragement and support. H. Baltazar was partially supported by CNPq/Brazil [Grant: 420578/2025-0] and [Grant:302389/2022-9]. A. da Silva was partially supported by CNPq/Brazil [Grant: 420578/2025-0].

\end{document}